\documentclass[preprint]{amsart}

\usepackage{amsmath,amsthm,amssymb}
\usepackage{bbm}
\usepackage{comment}

\newtheorem{thm}{Theorem}
\newtheorem{prp}{Proposition}
\newtheorem{crl}{Corollary}

\begin{document}
\title{Information geometry of L\'evy processes and financial models}
\author{Jaehyung Choi}
\address{}
\email{jj.jaehyung.choi@gmail.com}

\begin{abstract}
	We develop the information geometry of L\'evy processes. Deriving $\alpha$-divergences directly in terms of the L\'evy triplets of the L\'evy processes, we identify Fisher information matrix and $\alpha$-connection on the statistical manifold. In addition, we discuss statistical implications of this information geometry, including bias reduction estimation and Bayesian predictive priors. Several L\'evy processes, broadly used for financial modeling such as tempered stable processes, the CGMY model, variance gamma processes, and the Merton model, are investigated through their differential-geometric structures as illustrative examples.
\end{abstract}

\maketitle
\section{Introduction}
\label{sec_intro}
	Information geometry, which studies the differential-geometric structure of statistical models, has emerged as a powerful tool for understanding complex probabilistic systems. By representing families of probability distributions as Riemannian manifolds, this framework enables a geometric interpretation of statistical inference and parameter estimation procedures \cite{efron1975defining, efron1978geometry,firth1993bias,amari2000methods,komaki2006shrinkage,kass2011geometrical}. The geometric structures of statistical manifolds such as metric tensors and connections are derived from divergence functions, in particular, the $\alpha$-divergence that is a generalization of the Kullback--Leibler divergence. In this context, the metric tensor corresponds to the Fisher information matrix, which plays a fundamental role in statistical theory and applications. In recent decades, information geometry has been successfully applied to exponential families \cite{efron1975defining, efron1978geometry,firth1993bias,kosmidis2009bias,kosmidis2010generic,amari2000methods,kass2011geometrical}, time series models \cite{ravishanker1990differential, ravishanker2001differential, komaki2006shrinkage, tanaka2008superharmonic, barbaresco2006information, barbaresco2012information, tanaka2018superharmonic, choi2015kahlerian, choi2015geometric, oda2021shrinkage,choi2021kahlerian}, Bayesian prediction \cite{komaki2006shrinkage, tanaka2008superharmonic,tanaka2018superharmonic, choi2015kahlerian, choi2015geometric, oda2021shrinkage}, and more recently to stochastic processes with non-Gaussian features \cite{choi2025information}.

	In previous work \cite{choi2025information}, tempered stable processes, a subclass of L\'evy processes characterized by heavy tails and jumps, was investigated from the perspective of information geometry. The $\alpha$-divergence between two tempered stable processes was derived as a two-fold generalization of Kim and Lee \cite{kim2007relative} where the Kullback--Leibler divergence of classical tempered stable processes, a subset of tempered stable processes, was obtained. Moreover, the corresponding Fisher information matrix and $\alpha$-connection, which capture the geometric structure of these processes, were constructed from the $\alpha$-divergence. Several statistical applications from the information geometry of tempered stable processes were also provided.

	The differential-geometric approach to tempered stable processes opens new horizons for its applications in finance. Due to their ability to describe characteristic features of asset returns such as asymmetry and fat-tails, various tempered stable processes have been widely employed in financial modeling, where the normality assumption is empirically invalid. Their use in finance spans a range of models, including time series analysis via ARMA-GARCH models with tempered stable distributed innovations \cite{kim2009computing, kim2010tempered, kim2011time}, as well as applications in portfolio management, risk management, and option pricing \cite{tsuchida2012mean, beck2013empirical, choi2015reward, choi2024diversified, georgiev2015periodic, anand2017equity, kim2023deep}.

	However, tempered stable processes, while rich and flexible for financial modeling, constitute only a subset of the broader class of L\'evy processes. L\'evy processes include not only various tempered stable processes but also stable processes, compound Poisson processes, variance gamma processes, and other models \cite{cont2004nonparametric, rachev2011financial, madan1998variance}. These processes serve as fundamental models in the study of jump processes and infinite divisibility. Extending the information geometry developed for tempered stable processes \cite{choi2025information} to the full class of L\'evy processes is a natural and necessary progression with implications for both theoretical development and practical applications.

	The present paper aims to provide this extension. Our main goal is to develop the information geometry of L\'evy processes. As mentioned above, we generalize the construction of information-geometric structures from the tempered stable process setting \cite{choi2025information} to the full class of L\'evy processes. This extension also provides an alternative generalization of the result in Cont and Tankov \cite{cont2004nonparametric}, where the Kullback--Leibler divergence for L\'evy processes was derived. We begin by deriving the $\alpha$-divergence between two L\'evy processes. Since divergence functions evaluate the dissimilarity between probability distributions or statistical models, they underlie the definition of key geometric objects such as Fisher information matrix and $\alpha$-connection, derived from the $\alpha$-divergence \cite{amari2000methods, cichocki2010families}. We also discuss potential statistical applications for L\'evy processes from the geometric perspective.

	Having established the general framework, we investigate several concrete examples of L\'evy processes relevant to financial modeling. These include tempered stable processes \cite{rachev2011financial}, the CGMY model \cite{carr2002fine}, variance gamma processes \cite{madan1998variance}, and the Merton model \cite{merton1976option}, all of which have been widely used to model the stylized features of financial asset returns. For each model, we derive the $\alpha$-divergence and construct the associated information geometry such as Fisher information matrix and $\alpha$-connection. Moreover, the statistical advantages originated from the statistical manifolds of these processes are discussed.

	By developing a unified geometric viewpoint on L\'evy processes, this work contributes to the growing intersection of stochastic processes, mathematical finance, and statistics within the framework of information geometry. Our results offer both theoretical insights and practical tools for analyzing L\'evy processes and financial models. Leveraging these advantages, we build a bridge between abstract geometric concepts and concrete applications in domains such as finance where L\'evy processes play a central role.

	This paper is organized as follows. In Section~\ref{sec_levy}, we revisit the fundamentals of L\'evy processes and the relevant prior work. Section~\ref{sec_ig_levy} develops the information geometry of L\'evy processes, including $\alpha$-divergence, Fisher information matrix, and $\alpha$-connection, and also discusses its statistical applications. In Section~\ref{sec_app}, we present several illustrative examples with financial applications. Finally, we conclude the paper.
	
\section{L\'evy processes}
\label{sec_levy}
	In this section, we provide the theoretical background on L\'evy processes \cite{sato1999levy}. We also review prior work on the Kullback--Leibler divergence for L\'evy processes \cite{cont2004nonparametric}. These foundations will be extended in the next section to investigate the information geometry of L\'evy processes.
	
	L\'evy processes constitute a broad and fundamental class of stochastic processes characterized by stationary and independent increments. By generalizing both Brownian motion and Poisson processes, the processes become canonical building blocks for modeling random phenomena that exhibit discontinuities, jumps, and heavy tails. This versatile aspect makes L\'evy processes particularly suitable for describing systems influenced by random shocks and bursts. They are especially important in fields such as finance, physics, signal processing, and biology, where non-Gaussian and discontinuous behavior is frequently observed. 
	
	Mathematically, a L\'evy process $(X_t, \mathbb{P})_{t \in [0,T]}$ on a probability space $(\Omega, \mathcal{F})$ is a stochastic process that satisfies the following properties:
	\begin{itemize}
		\item $X_0 = 0$ almost surely.
		\item \textbf{Independent increments:} For any $0 \le t_0 \le t_1 \le \cdots \le t_n$, the increments $X_{t_1} - X_{t_0}, X_{t_2} - X_{t_1},\cdots, X_{t_n} - X_{t_{n-1}}$ are mutually independent.
		\item \textbf{Stationary increments:} For all $s, t \ge 0$, the distribution of $X_{t+s} - X_s$ depends only on $t$.
		\item \textbf{Stochastic continuity:} For all $\epsilon > 0$, $\lim_{h \to 0} \mathbb{P}(|X_{t+h} - X_t| > \epsilon) = 0$.
	\end{itemize}
	These conditions ensure that L\'evy processes are useful for modeling phenomena with both continuous and discontinuous sample paths. While Brownian motion accounts for continuous evolution, L\'evy processes incorporate jumps via their L\'evy measures, capturing more complex and realistic dynamics.
	
	The L\'evy--It\^{o} decomposition states that a L\'evy process can be uniquely expressed as the combination of a Brownian (diffusion) component, a drift term, and a pure jump component governed by a L\'evy measure. This decomposition for a L\'evy process is fully described by its characteristic (L\'evy) triplet $(\sigma, \nu, \gamma)$, where $\sigma \ge 0$ is the diffusion coefficient, $\nu$ is the L\'evy measure on $\mathbb{R}$, and $\gamma \in \mathbb{R}$ is the drift term. The L\'evy measure $\nu$ is a positive measure satisfying the following conditions:
	\begin{align}
		\nu({0}) = 0, \quad \int_{-1}^{+1} x^2 \nu(dx) < \infty, \quad \int_{|x| > 1} \nu(dx) < \infty.
	\end{align}
	
	A cornerstone of L\'evy process theory is the L\'evy--Khintchine formula, which characterizes L\'evy processes via their characteristic functions \cite{sato1999levy, rachev2011financial}. According to the L\'evy--Khintchine formula, the characteristic function of a L\'evy process $(X_t, \mathbb{P})_{t \in [0, T]}$ with a L\'evy triplet $(\sigma, \nu, \gamma)$ is given by
	\begin{align}
	\label{charac_ftn_levy}
		\phi_t(z; \boldsymbol{\xi}) = \mathbb{E}[{\rm e}^{izX_t}] = \exp\left(-t \frac{\sigma^2 z^2}{2} + it \gamma z + t \int_{-\infty}^{\infty} ({\rm e}^{izx} - 1 - izx \mathbbm{1}_{|x| \leq 1}) \nu(dx)\right),
	\end{align}
	where $\boldsymbol{\xi}$ is the parameter vector of the L\'evy process, and $\mathbbm{1}$ is the indicator function such that $\mathbbm{1}_A(x)=1$ when $x\in A$ and 0 otherwise. 

	The logarithmic characteristic function $\Phi_t(z; \boldsymbol{\xi})$, also known as the L\'evy (or characteristic) exponent, is given by the L\'evy--Khintchine representation:
	\begin{align}
	\label{log_charac_ftn_levy}
		\Phi_t(z; \boldsymbol{\xi}) = \log \phi_t(z; \boldsymbol{\xi}) = -t \frac{\sigma^2 z^2}{2} + it \gamma z + t \int_{-\infty}^{\infty} ({\rm e}^{izx} - 1 - izx \mathbbm{1}_{|x| \leq 1}) \nu(dx).
	\end{align}
	It is noteworthy that the martingale condition for L\'evy processes is expressed as
	\begin{align}
		\Phi_t(-i; \boldsymbol{\xi}) = 0,
	\end{align}
	which is identically satisfied when
	\begin{align}
		\label{exp_levy_martingale}
		\gamma = -\frac{\sigma^2}{2} - \int_{-\infty}^{\infty} \left({\rm e}^{x} - 1 - x\mathbbm{1}_{|x| \leq 1}\right) \nu(dx).
	\end{align}
	This martingale condition is particularly important in the context of exponential L\'evy processes \cite{cont2004nonparametric}.
	
	According to Sato \cite{sato1999levy}, the Radon--Nikodym derivative between two L\'evy processes can be defined under some conditions. The following proposition outlines the formulation of the Radon--Nikodym derivative for L\'evy processes.
	\begin{prp}[Sato (1999)]
	\label{prp_sato}
		Let $(X_t, \mathbb{P})_{t\in[0,T]}$ and $(X_t, \mathbb{Q})_{t\in[0,T]}$ be two L\'evy processes on $(\Omega,\mathcal{F})$ with characteristic triplets $(\sigma_\mathbb{P},\nu_\mathbb{P},\gamma_{\mathbb{P}})$ and $(\sigma_\mathbb{Q},\nu_\mathbb{Q},\gamma_{\mathbb{Q}})$, respectively. Then $\mathbb{P}|_{\mathcal{F}_t}$ and $\mathbb{Q}|_{\mathcal{F}_t}$ are mutually absolutely continuous for all $t$ if and only if the following conditions are satisfied:
	\begin{itemize}
		\item $\sigma_\mathbb{P}=\sigma_\mathbb{Q}=\sigma$.
		\item The L\'evy measures are mutually absolutely continuous with
		\begin{align}
			\int_{-\infty}^{\infty}({\rm e}^{\psi(x)/2}-1)^2\nu_{\mathbb{Q}}(dx)<\infty,
		\end{align}
		where $\psi$ is the logarithm of the Radon--Nikodym density of $\nu_{\mathbb{P}}$ with respect to $\nu_{\mathbb{Q}}$: ${\rm e}^\psi(x)=d\nu_{\mathbb{P}}/d\nu_{\mathbb{Q}}$.
		\item If $\sigma=0$, then in addition, $\gamma_{\mathbb{Q}}$ must satisfy
		\begin{align}
			\label{zv_condition}
			\gamma_{\mathbb{P}}-\gamma_{\mathbb{Q}}=\int_{-1}^{1}x(\nu_\mathbb{P}-\nu_\mathbb{Q})(dx).
		\end{align}
	\end{itemize}
	
	The Radon--Nikodym derivative is given by
	\begin{align}
		\frac{d\mathbb{P}|_{\mathcal{F}_t}}{d\mathbb{Q}|_{\mathcal{F}_t}}={\rm e}^{U_t},
	\end{align}
	where $(U_t,\mathbb{U})_{t\in[0,T]}$ is a L\'evy process with a characteristic triplet $(\sigma_\mathbb{U},\nu_\mathbb{U},\gamma_{\mathbb{U}})$ given by 
	\begin{align}
		\label{u_sigma}
		\sigma_\mathbb{U}&=\sigma \eta,\\
		\label{u_nu}
		\nu_\mathbb{U}&=\nu_{\mathbb{Q}} \psi^{-1},\\
		\label{u_gamma}
		\gamma_\mathbb{U}&=-\frac{1}{2}(\sigma\eta)^2-\int_{-\infty}^{\infty}({\rm e}^y-1-y\mathbbm{1}_{|y|\leq1})(\nu_{\mathbb{Q}} \psi^{-1})(dy),
	\end{align}
	and $\eta$ is chosen so that
	\begin{align}
		\label{eta}
		\gamma_{\mathbb{P}}-\gamma_{\mathbb{Q}}-\int_{-1}^{1}x(\nu_\mathbb{P}-\nu_\mathbb{Q})(dx)=\sigma^2\eta.
	\end{align}
	Moreover, $U_t$ satisfies 
	\begin{align}
		\mathbb{E}_{\mathbb{Q}}[{\rm e}^{U_t}]=1
	\end{align}
	for all $t$.
	\end{prp}

	We assume that L\'evy processes in this paper satisfy Proposition~\ref{prp_sato}. 
	
	Given the Radon--Nikodym derivative and the conditions stated in Proposition~\ref{prp_sato}, the Kullback--Leibler divergence between L\'evy processes can be well-defined. Cont and Tankov \cite{cont2004nonparametric} derived an explicit expression for the Kullback--Leibler divergence of L\'evy processes in this context.
	\begin{prp}[Cont and Tankov (2004)]
	\label{prp_cont}
		Let $(X_t, \mathbb{P})_{t\in[0,T]}$ and $(X_t, \mathbb{Q})_{t\in[0,T]}$ be two L\'evy processes on $(\Omega,\mathcal{F})$ with characteristic triplets $(\sigma,\nu_\mathbb{P},\gamma_{\mathbb{P}})$ and $(\sigma,\nu_\mathbb{Q},\gamma_{\mathbb{Q}})$. The Kullback--Leibler divergence $KL(\mathbb{P}||\mathbb{Q})$ is then given by
		\begin{align}
			KL(\mathbb{P}||\mathbb{Q})=\frac{T}{2\sigma^2}\big(\gamma_{\mathbb{P}}-\gamma_{\mathbb{Q}}-\int_{-1}^{1}x(\nu_{\mathbb{P}}-\nu_{\mathbb{Q}})(dx)\big)^2+T\int_{-\infty}^{\infty} \Big(\Big(\frac{d\nu_\mathbb{P}}{d\nu_\mathbb{Q}}\Big) \log{\Big(\frac{d\nu_\mathbb{P}}{d\nu_\mathbb{Q}}\Big)}- \Big(\frac{d\nu_\mathbb{P}}{d\nu_\mathbb{Q}}\Big)+1\Big) \nu_\mathbb{Q}(dx).
		\end{align}
		
		If $\mathbb{P}$ and $\mathbb{Q}$ correspond to risk-neutral exponential L\'evy models, i.e., verify the martingale condition given by Eq.~(\ref{exp_levy_martingale}), the Kullback--Leibler divergence is reduced to
		\begin{align}
			KL(\mathbb{P}||\mathbb{Q})=\frac{T}{2\sigma^2}\big(\int_{-\infty}^{\infty}({\rm e}^x-1)(\nu_{\mathbb{P}}-\nu_{\mathbb{Q}})(dx)\big)^2+T\int_{-\infty}^{\infty} \Big(\Big(\frac{d\nu_\mathbb{P}}{d\nu_\mathbb{Q}}\Big) \log{\Big(\frac{d\nu_\mathbb{P}}{d\nu_\mathbb{Q}}\Big)}- \Big(\frac{d\nu_\mathbb{P}}{d\nu_\mathbb{Q}}\Big)+1\Big) \nu_\mathbb{Q}(dx).
		\end{align}
	\end{prp}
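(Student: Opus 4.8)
\emph{Proof strategy.} The plan is to combine the explicit Radon--Nikodym density from Proposition~\ref{prp_sato} with the defining formula $KL(\mathbb{P}||\mathbb{Q})=\mathbb{E}_{\mathbb{P}}[\log(d\mathbb{P}|_{\mathcal{F}_T}/d\mathbb{Q}|_{\mathcal{F}_T})]$. Writing the density as ${\rm e}^{U_T}$, where $(U_t,\mathbb{U})_{t\in[0,T]}$ is the L\'evy process whose $\mathbb{Q}$-triplet $(\sigma_{\mathbb{U}},\nu_{\mathbb{U}},\gamma_{\mathbb{U}})$ is given by Eqs.~(\ref{u_sigma})--(\ref{eta}), a change of measure gives
\begin{align}
	KL(\mathbb{P}||\mathbb{Q})=\mathbb{E}_{\mathbb{P}}[U_T]=\mathbb{E}_{\mathbb{Q}}\big[U_T\,{\rm e}^{U_T}\big].
\end{align}
The task is then to evaluate the right-hand side in terms of the triplets of $\mathbb{P}$ and $\mathbb{Q}$.

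First I would introduce the Laplace exponent $\kappa_{\mathbb{U}}(z)$ of $U$ under $\mathbb{Q}$, i.e.\ $\mathbb{E}_{\mathbb{Q}}[{\rm e}^{zU_T}]={\rm e}^{T\kappa_{\mathbb{U}}(z)}$ with
\begin{align}
	\kappa_{\mathbb{U}}(z)=\frac{\sigma_{\mathbb{U}}^2 z^2}{2}+\gamma_{\mathbb{U}} z+\int_{-\infty}^{\infty}\big({\rm e}^{zy}-1-zy\mathbbm{1}_{|y|\leq1}\big)\nu_{\mathbb{U}}(dy),
\end{align}
which is the real analytic continuation of the characteristic exponent in Eq.~(\ref{log_charac_ftn_levy}). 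The normalization $\mathbb{E}_{\mathbb{Q}}[{\rm e}^{U_T}]=1$ in Proposition~\ref{prp_sato} gives $\kappa_{\mathbb{U}}(1)=0$ (which is also consistent with the choice of $\gamma_{\mathbb{U}}$ in Eq.~(\ref{u_gamma})). Differentiating ${\rm e}^{T\kappa_{\mathbb{U}}(z)}$ in $z$ at $z=1$ then yields $\mathbb{E}_{\mathbb{Q}}[U_T\,{\rm e}^{U_T}]=T\,\kappa_{\mathbb{U}}'(1)$, so that $KL(\mathbb{P}||\mathbb{Q})=T\,\kappa_{\mathbb{U}}'(1)$.

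Next I would compute $\kappa_{\mathbb{U}}'(1)=\sigma_{\mathbb{U}}^2+\gamma_{\mathbb{U}}+\int(y\,{\rm e}^{y}-y\mathbbm{1}_{|y|\leq1})\nu_{\mathbb{U}}(dy)$ and substitute $\gamma_{\mathbb{U}}$ from Eq.~(\ref{u_gamma}); the indicator contributions cancel and half of $\sigma_{\mathbb{U}}^2$ cancels, leaving
\begin{align}
	\kappa_{\mathbb{U}}'(1)=\frac{\sigma_{\mathbb{U}}^2}{2}+\int_{-\infty}^{\infty}\big({\rm e}^{y}(y-1)+1\big)\nu_{\mathbb{U}}(dy).
\end{align}
Then I would unravel the two pieces. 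For the jump term, using $\nu_{\mathbb{U}}=\nu_{\mathbb{Q}}\psi^{-1}$ from Eq.~(\ref{u_nu}) (the pushforward of $\nu_{\mathbb{Q}}$ under $\psi$) together with ${\rm e}^{\psi(x)}=d\nu_{\mathbb{P}}/d\nu_{\mathbb{Q}}(x)$, hence $\psi(x)=\log(d\nu_{\mathbb{P}}/d\nu_{\mathbb{Q}})(x)$, the change-of-variables formula turns $\int({\rm e}^{y}(y-1)+1)\nu_{\mathbb{U}}(dy)$ into exactly $\int\big((d\nu_{\mathbb{P}}/d\nu_{\mathbb{Q}})\log(d\nu_{\mathbb{P}}/d\nu_{\mathbb{Q}})-(d\nu_{\mathbb{P}}/d\nu_{\mathbb{Q}})+1\big)\nu_{\mathbb{Q}}(dx)$. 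For the diffusion term, $\sigma_{\mathbb{U}}=\sigma\eta$ and Eq.~(\ref{eta}) give $\tfrac12\sigma_{\mathbb{U}}^2=\tfrac12\sigma^2\eta^2=\tfrac{1}{2\sigma^2}\big(\gamma_{\mathbb{P}}-\gamma_{\mathbb{Q}}-\int_{-1}^{1}x(\nu_{\mathbb{P}}-\nu_{\mathbb{Q}})(dx)\big)^2$. Multiplying by $T$ produces the first asserted identity. For the risk-neutral reduction, I would insert the martingale condition~(\ref{exp_levy_martingale}) for both $\gamma_{\mathbb{P}}$ and $\gamma_{\mathbb{Q}}$ and subtract; the $x\mathbbm{1}_{|x|\leq1}$ contributions cancel against $\int_{-1}^{1}x(\nu_{\mathbb{P}}-\nu_{\mathbb{Q}})(dx)$, leaving $\gamma_{\mathbb{P}}-\gamma_{\mathbb{Q}}-\int_{-1}^{1}x(\nu_{\mathbb{P}}-\nu_{\mathbb{Q}})(dx)=-\int({\rm e}^{x}-1)(\nu_{\mathbb{P}}-\nu_{\mathbb{Q}})(dx)$, and squaring removes the sign to give the second identity.

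The step that requires genuine care is the differentiation $\frac{d}{dz}\mathbb{E}_{\mathbb{Q}}[{\rm e}^{zU_T}]\big|_{z=1}=\mathbb{E}_{\mathbb{Q}}[U_T{\rm e}^{U_T}]$, i.e.\ the finiteness and smoothness of $\kappa_{\mathbb{U}}$ in a neighbourhood of $z=1$. Since $x{\rm e}^{x}$ is bounded on $(-\infty,0]$, the only real issue is the right tail, equivalent to $\int_{|y|>1}{\rm e}^{(1+\epsilon)y}\nu_{\mathbb{U}}(dy)<\infty$ for some $\epsilon>0$, which after pushing forward is an integrability requirement on $(d\nu_{\mathbb{P}}/d\nu_{\mathbb{Q}})^{1+\epsilon}$ against $\nu_{\mathbb{Q}}$; under the standing assumption that Proposition~\ref{prp_sato} holds and that the integrals in the statement converge, this is in force and differentiation under the expectation is legitimate. (Alternatively one can avoid the analyticity discussion entirely by computing the $\mathbb{P}$-triplet of $U$ from an Esscher-type change of measure and evaluating $\mathbb{E}_{\mathbb{P}}[U_T]$ directly, which again yields $T\,\kappa_{\mathbb{U}}'(1)$.) The remaining manipulations are routine bookkeeping with the triplet of $U$ and the change of variables for the pushforward L\'evy measure.
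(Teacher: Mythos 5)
Your proposal is correct and follows essentially the same route as the paper: the paper evaluates $\mathbb{E}_{\mathbb{Q}}[U_T\,{\rm e}^{U_T}]$ as $-i\Phi_T'(-i;\boldsymbol{\xi}_{\mathbb{U}})$ and then substitutes the triplet of $U$ from Proposition~\ref{prp_sato}, which is exactly your $T\,\kappa_{\mathbb{U}}'(1)$ computation written in terms of the characteristic exponent at $z=-i$ rather than the Laplace exponent at $z=1$. The subsequent bookkeeping (cancellation of the indicator terms, the pushforward change of variables, $\tfrac12\sigma^2\eta^2$ via Eq.~(\ref{eta}), and the martingale reduction) matches the paper's derivation step for step; your added remark on justifying differentiation under the expectation is a point the paper leaves implicit.
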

	In brief, the proof of Proposition~\ref{prp_cont} relies on applying the Radon--Nikodym derivative of Proposition~\ref{prp_sato} to the definition of the Kullback--Leibler divergence. We will revisit this proof in the following section.

\section{Information geometry of L\'evy processes}
\label{sec_ig_levy}
	In this section, we explore the information geometry of L\'evy processes. Starting with deriving the $\alpha$-divergence of L\'evy processes from the definition of $f$-divergence, we investigate the associated geometric structures, including Fisher information matrix and $\alpha$-connection. The discussion of its statistical applications follows.
	
	Let us begin with the definition of $f$-divergence. The $f$-divergence between two probability distributions $\mathbb{P}$ and $\mathbb{Q}$ with density functions $p(x; \boldsymbol{\xi})$ and $q(x; \tilde{\boldsymbol{\xi}})$, respectively, is defined as
	\begin{align}
	\label{f_div}
		D_{f}(\mathbb{P}||\mathbb{Q})=\mathbb{E}_{\mathbb{Q}}\Big[f\Big(\frac{p(x;\boldsymbol{\xi})}{q(x;\tilde{\boldsymbol{\xi}})}\Big)\Big],
	\end{align}
	where $f$ is a convex function such that $f(1) = 0$ and $f(x)$ is finite for all $x > 0$.

	The $f$-divergence covers a variety of distance and divergence measures commonly used in statistics and information theory. For example, the Kullback--Leibler divergence arises from the $f$-divergence when the function $f$ is given by
	\begin{align}
	\label{kl_div_f}
		f_{KL}(t)=t \log{t} - t +1.
	\end{align}

	Another important example of $f$-divergence is the $\alpha$-divergence, which plays a central role in information geometry \cite{amari2000methods}. The $\alpha$-divergence can be expressed as the $f$-divergence with the following choice of $f$:
	\begin{align}
	\label{a_div_f}
		f^{(\alpha)}(t)=\left\{ 
	\begin{array}{ll}
	\frac{4}{1-\alpha ^{2}}\Big(\frac{1-\alpha}{2} t +\frac{1+\alpha}{2}-t^{\frac{1-\alpha}{2}}\Big) & (\alpha \neq \pm 1)\\ 
	t \log{t} - t +1 & (\alpha =-1)\\
	- \log{t} +t - 1 & (\alpha =1)
	\end{array}
	\right..
	\end{align}
	It is straightforward to verify that the functions $f^{(\alpha)}$ for $\alpha = \pm1$ can be obtained by applying L’H\^{o}pital’s rule to the $f$-divergence for $\alpha \neq \pm1$ and taking the limit of $\alpha \to \pm1$, respectively. Moreover, the $\alpha = -1$ case in Eq.~(\ref{a_div_f}) coincides with Eq.~(\ref{kl_div_f}) for the Kullback--Leibler divergence.
	
	By applying  Eq.~(\ref{a_div_f}) to Eq.~(\ref{f_div}), the $\alpha$-divergence between two probability density functions $p(x; \boldsymbol{\xi})$ and $q(x; \tilde{\boldsymbol{\xi}})$ is defined as follows \cite{amari2000methods}:
	\begin{align}
	 \label{alpha_div}
	D^{(\alpha )}(\mathbb{P}||\mathbb{Q})=\left\{ 
	\begin{array}{ll}
	\frac{4}{1-\alpha^{2}}\int_{-\infty}^{\infty}(\frac{1-\alpha}{2}p(x;\boldsymbol{\xi})+\frac{1+\alpha}{2}q(x;\tilde{\boldsymbol{\xi}})-p(x;\boldsymbol{\xi})^{\frac{1-\alpha}{2}}q(x;\tilde{\boldsymbol{\xi}})^{\frac{1+\alpha}{2}})dx & (\alpha \neq \pm 1)\\ 
	\int_{-\infty}^{\infty}(p(x;\boldsymbol{\xi}) \log{\frac{p(x;\boldsymbol{\xi})}{q(x;\tilde{\boldsymbol{\xi}})}}-p(x;\boldsymbol{\xi})+q(x;\tilde{\boldsymbol{\xi}}))dx & (\alpha =-1)\\
	\int_{-\infty}^{\infty}(q(x;\tilde{\boldsymbol{\xi}}) \log{\frac{q(x;\tilde{\boldsymbol{\xi}})}{p(x;\boldsymbol{\xi})}}-q(x;\tilde{\boldsymbol{\xi}})+p(x;\boldsymbol{\xi}))dx & (\alpha =1)
	\end{array}
	\right.,
	\end{align}
	where $\boldsymbol{\xi}$ and $\tilde{\boldsymbol{\xi}}$ are parameters of $p$ and $q$, respectively. 
	
	It is noteworthy that the $\alpha$-divergence exhibits an interesting property known as $\alpha$-duality \cite{amari2000methods}:
	\begin{align}
	\label{a_duality}
		D^{(\alpha )}(\mathbb{P}||\mathbb{Q})=D^{(-\alpha )}(\mathbb{Q}||\mathbb{P}).
	\end{align}
	Flipping the sign of $\alpha$ is equivalent to exchanging the roles of $\mathbb{P}$ and $\mathbb{Q}$. In particular, it is straightforward to verify that the Hellinger distance, corresponding to $\alpha = 0$, is self-dual.
	
	It is natural to extend the preceding discussion from probability density functions to Radon--Nikodym derivatives. The $f$-divergence can be defined in terms of the Radon--Nikodym derivative as follows:
	\begin{align}
	\label{f_div_rn}
		D_{f}(\mathbb{P}||\mathbb{Q})=\mathbb{E}_{\mathbb{Q}}\Big[f\Big(\frac{d\mathbb{P}}{d\mathbb{Q}}\Big)\Big].
	\end{align}

	Based on Eq.~(\ref{f_div_rn}), Cont and Tankov \cite{cont2004nonparametric} computed the Kullback--Leibler divergence for L\'evy processes under Proposition~\ref{prp_sato} by evaluating the following expression:
	\begin{align}
		KL(\mathbb{P}||\mathbb{Q})=\mathbb{E}_{\mathbb{Q}}[f_{KL}({\rm{e}}^{U_T})],
	\end{align}
	where $f_{KL}$ is given by Eq.~(\ref{kl_div_f}) and $U_T$ is defined in Proposition~\ref{prp_sato}. This calculation consists of the basis for the proof of Proposition~\ref{prp_cont}, as mentioned earlier.
	
	Analogous to Proposition~\ref{prp_cont}, the $\alpha$-divergence between L\'evy processes is derived by applying Eq.~(\ref{a_div_f}) to Eq.~(\ref{f_div_rn}):
	\begin{align}
	\label{a_div_rn}
		D^{(\alpha)}(\mathbb{P}||\mathbb{Q})=\mathbb{E}_{\mathbb{Q}}[f^{(\alpha)}({\rm{e}}^{U_T})].
	\end{align}
	
	Before proceeding with the derivation of the $\alpha$-divergence, we first consider the following proposition for further discussion.
	\begin{prp}
	\label{prp_delta}
	Let $(X_t, \mathbb{P})_{t\in[0,T]}$ and $(X_t, \mathbb{Q})_{t\in[0,T]}$ be L\'evy processes on $(\Omega,\mathcal{F})$ with L\'evy triplets of $(\sigma,\nu_\mathbb{P},\gamma_{\mathbb{P}})$ and $(\sigma,\nu_\mathbb{Q},\gamma_{\mathbb{Q}})$, respectively. Let us define $\Delta^{(\alpha)}_T(\mathbb{P}||\mathbb{Q})$ as
	\begin{align}
		\label{delta_for_char_fn}
		\Delta^{(\alpha)}_T(\mathbb{P}||\mathbb{Q})=-\Phi_T(-i\frac{1-\alpha}{2};\boldsymbol{\xi}_{\mathbb{U}}),
	\end{align}
 	where $\Phi_T(z;\boldsymbol{\xi})$ is given by Eq.~(\ref{log_charac_ftn_levy}) and $\boldsymbol{\xi}_{\mathbb{U}}$ is the parameters of $U_T$ defined in Proposition~\ref{prp_sato}.

	When $\sigma\neq0$, we have $\Delta^{(\alpha)}_T(\mathbb{P}||\mathbb{Q})$ given by
	\begin{align}
	\begin{aligned}
	\label{delta_nz}
		\Delta^{(\alpha)}_T(\mathbb{P}||\mathbb{Q})=&\frac{1-\alpha^2}{4}\frac{T}{2\sigma^2}\big(\gamma_{\mathbb{P}}-\gamma_{\mathbb{Q}}-\int_{-1}^{1}x(\nu_{\mathbb{P}}-\nu_{\mathbb{Q}})(dx)\big)^2\\
		&+T\int_{-\infty}^{\infty} \Big(\frac{1-\alpha}{2}\Big(\frac{d\nu_\mathbb{P}}{d\nu_\mathbb{Q}}\Big) +\frac{1+\alpha}{2}-\Big(\frac{d\nu_\mathbb{P}}{d\nu_\mathbb{Q}}\Big)^{\frac{1-\alpha}{2}}\Big) \nu_\mathbb{Q}(dx).
	\end{aligned}
	\end{align}
	
	If $\mathbb{P}$ and $\mathbb{Q}$ correspond to risk-neutral exponential L\'evy models, i.e., verify the martingale condition given by Eq.~(\ref{exp_levy_martingale}), $\Delta^{(\alpha)}_T(\mathbb{P}||\mathbb{Q})$ is represented with
	\begin{align}
	\begin{aligned}
	\label{delta_nz_martingale}
		\Delta^{(\alpha)}_T(\mathbb{P}||\mathbb{Q})=&\frac{1-\alpha^2}{4}\frac{T}{2\sigma^2}\big(\int_{-\infty}^{\infty}({\rm e}^{x}-1)(\nu_{\mathbb{P}}-\nu_{\mathbb{Q}})(dx)\big)^2\\ 
		&+T\int_{-\infty}^{\infty} \Big(\frac{1-\alpha}{2}\Big(\frac{d\nu_\mathbb{P}}{d\nu_\mathbb{Q}}\Big) +\frac{1+\alpha}{2}-\Big(\frac{d\nu_\mathbb{P}}{d\nu_\mathbb{Q}}\Big)^{\frac{1-\alpha}{2}}\Big) \nu_\mathbb{Q}(dx).
	\end{aligned}
	\end{align}
	
	When $\sigma=0$, we have the $\alpha$-divergence between two L\'evy processes as
	\begin{align}
	\begin{aligned}
	\label{delta_zv}
		\Delta^{(\alpha)}_T(\mathbb{P}||\mathbb{Q})=T\int_{-\infty}^{\infty} \Big(\frac{1-\alpha}{2}\Big(\frac{d\nu_\mathbb{P}}{d\nu_\mathbb{Q}}\Big) +\frac{1+\alpha}{2}-\Big(\frac{d\nu_\mathbb{P}}{d\nu_\mathbb{Q}}\Big)^{\frac{1-\alpha}{2}}\Big) \nu_\mathbb{Q}(dx).
	\end{aligned}
	\end{align}
	\end{prp}

	\begin{proof}
	 By applying Eq.~(\ref{u_sigma}), Eq.~(\ref{u_nu}), and Eq.~(\ref{u_gamma}), $T^{-1} \Phi_T\left(-i \frac{1 - \alpha}{2}; \boldsymbol{\xi}_{\mathbb{U}}\right)$ can be expressed with
	 \begin{align}
	 \label{delta_derivation}
	 \begin{aligned}
	 	&T^{-1}\Phi_T(-i\frac{1-\alpha}{2};\boldsymbol{\xi}_{\mathbb{U}})\\
		&=\Big(\frac{1-\alpha}{2}\Big)^2\frac{\sigma_{\mathbb{U}}^2}{2}+\frac{1-\alpha}{2}\gamma_{\mathbb{U}}+\int_{-\infty}^{\infty}({\rm e}^{\frac{1-\alpha}{2}y}-1-\frac{1-\alpha}{2}y \mathbbm{1}_{|y|\leq1})\nu_{\mathbb{U}}(dy)\\
		&=\Big(\frac{1-\alpha}{2}\Big)^2\frac{(\sigma\eta)^2}{2}+\frac{1-\alpha}{2}\Big(-\frac{(\sigma\eta)^2}{2}-\int_{-\infty}^{\infty}({\rm e}^y-1-y\mathbbm{1}_{|y|\leq1})(\nu_{\mathbb{Q}}\psi^{-1})(dy)\Big)\\
		&\quad\textrm{ }+\int_{-\infty}^{\infty}({\rm e}^{\frac{1-\alpha}{2}y}-1-\frac{1-\alpha}{2}y \mathbbm{1}_{|y|\leq1})(\nu_{\mathbb{Q}}\psi^{-1})(dy)\\
		&=-\frac{1-\alpha^2}{4}\frac{(\sigma\eta)^2}{2}-\int_{-\infty}^{\infty}(\frac{1-\alpha}{2}{\rm e}^y+\frac{1+\alpha}{2}-{\rm e}^{\frac{1-\alpha}{2}y})(\nu_{\mathbb{Q}}\psi^{-1})(dy)\\
		&=-\frac{1-\alpha^2}{4}\frac{(\sigma\eta)^2}{2}-\int_{-\infty}^{\infty} \Big(\frac{1-\alpha}{2}\Big(\frac{d\nu_\mathbb{P}}{d\nu_\mathbb{Q}}\Big) +\frac{1+\alpha}{2}-\Big(\frac{d\nu_\mathbb{P}}{d\nu_\mathbb{Q}}\Big)^{\frac{1-\alpha}{2}}\Big) \nu_\mathbb{Q}(dx).
	\end{aligned}
	\end{align}
	
	When $\sigma\neq0$, plugging Eq.~(\ref{eta}) to Eq.~(\ref{delta_derivation}) yields $T^{-1} \Phi_T\left(-i \frac{1 - \alpha}{2}; \boldsymbol{\xi}_{\mathbb{U}}\right)$ as
	\begin{align}
		&T^{-1}\Phi_T(-i\frac{1-\alpha}{2};\boldsymbol{\xi}_{\mathbb{U}})\nonumber\\
		&=-\frac{1-\alpha^2}{4}\frac{1}{2\sigma^2}\big(\gamma_{\mathbb{P}}-\gamma_{\mathbb{Q}}-\int_{-1}^{1}x(\nu_{\mathbb{P}}-\nu_{\mathbb{Q}})(dx)\big)^2-\int_{-\infty}^{\infty} \Big(\frac{1-\alpha}{2}\Big(\frac{d\nu_\mathbb{P}}{d\nu_\mathbb{Q}}\Big) +\frac{1+\alpha}{2}-\Big(\frac{d\nu_\mathbb{P}}{d\nu_\mathbb{Q}}\Big)^{\frac{1-\alpha}{2}}\Big) \nu_\mathbb{Q}(dx).\nonumber
	\end{align}
	From the equation above, we obtain Eq.~(\ref{delta_nz}).
	
	If $\mathbb{P}$ and $\mathbb{Q}$ are risk-neutral exponential L\'evy models, the martingale condition in Eq.~(\ref{exp_levy_martingale}) is satisfied. By applying this condition to the first term in Eq.~(\ref{delta_nz}), the expression simplifies to
	\begin{align}
		&\gamma_{\mathbb{P}}-\gamma_{\mathbb{Q}}-\int_{-1}^{1}x(\nu_{\mathbb{P}}-\nu_{\mathbb{Q}})(dx)\nonumber\\
		&=\big(-\frac{\sigma^2}{2}-\int_{-\infty}^{\infty}({\rm e}^{x}-1-x \mathbbm{1}_{|x|\leq1})\nu_{\mathbb{P}}(dx)\big)-\big(-\frac{\sigma^2}{2}-\int_{-\infty}^{\infty}({\rm e}^{x}-1-x \mathbbm{1}_{|x|\leq1})\nu_{\mathbb{Q}}(dx)\big)-\int_{-1}^{1}x(\nu_{\mathbb{P}}-\nu_{\mathbb{Q}})(dx)\nonumber\\
		&=-\int_{-\infty}^{\infty}({\rm e}^{x}-1)(\nu_{\mathbb{P}}-\nu_{\mathbb{Q}})(dx).\nonumber
	\end{align}
	It is straightforward to verify that substituting the expression above into Eq.~(\ref{delta_nz}) yields Eq.~(\ref{delta_nz_martingale}).

	When $\sigma=0$ in Eq.~(\ref{delta_derivation}), $\Phi_T(-i\frac{1-\alpha}{2};\boldsymbol{\xi}_{\mathbb{U}})$ is obtained as
	\begin{align}
		T^{-1}\Phi_T(-i\frac{1-\alpha}{2};\boldsymbol{\xi}_{\mathbb{U}})&=-\int_{-\infty}^{\infty} \Big(\frac{1-\alpha}{2}\Big(\frac{d\nu_\mathbb{P}}{d\nu_\mathbb{Q}}\Big) +\frac{1+\alpha}{2}-\Big(\frac{d\nu_\mathbb{P}}{d\nu_\mathbb{Q}}\Big)^{\frac{1-\alpha}{2}}\Big) \nu_\mathbb{Q}(dx).\nonumber
	\end{align}
	From the equation above, we derive Eq.~(\ref{delta_zv}). This also implies that when $\sigma=0$, the first terms in both Eq.~(\ref{delta_nz}) and Eq.~(\ref{delta_nz_martingale}) vanish, allowing Eq.~(\ref{delta_zv}) to be obtained directly from them.
	\end{proof}
	 It is worth noting that $\Delta^{(\alpha)}_T(\mathbb{P} || \mathbb{Q})$ vanishes when $\mathbb{P}$ and $\mathbb{Q}$ are identical. Additionally, it also becomes zero when $\alpha = \pm 1$.

	The following theorem provides the expression for the $\alpha$-divergence of L\'evy processes.
	\begin{thm}
	\label{thm_div_levy}
	Let $(X_t, \mathbb{P})_{t\in[0,T]}$ and $(X_t, \mathbb{Q})_{t\in[0,T]}$ be L\'evy processes on $(\Omega,\mathcal{F})$ with L\'evy triplets of $(\sigma,\nu_\mathbb{P},\gamma_{\mathbb{P}})$ and $(\sigma,\nu_\mathbb{Q},\gamma_{\mathbb{Q}})$, respectively. When $\sigma\neq0$, we have the $\alpha$-divergence between two L\'evy processes as
	 \begin{align}
	 \label{a_div_measure_levy}
	D^{(\alpha )}(\mathbb{P}||\mathbb{Q})=\left\{ 
	\begin{array}{ll}
	\frac{4}{1-\alpha ^{2}}\Big\{1-\exp\big[-\frac{1-\alpha^2}{4}\frac{T}{2\sigma^2}\big(\gamma_{\mathbb{P}}-\gamma_{\mathbb{Q}}-\int_{-1}^{1}x(\nu_{\mathbb{P}}-\nu_{\mathbb{Q}})(dx)\big)^2 & \\
	-T\int_{-\infty}^{\infty}\Big(\frac{1-\alpha}{2}\Big(\frac{d\nu_\mathbb{P}}{d\nu_\mathbb{Q}}\Big) +\frac{1+\alpha}{2}-\Big(\frac{d\nu_\mathbb{P}}{d\nu_\mathbb{Q}}\Big)^{\frac{1-\alpha}{2}}\Big) \nu_\mathbb{Q}(dx)\big]\Big\} & (\alpha \neq \pm 1)\\ 
	\frac{T}{2\sigma^2}\big(\gamma_{\mathbb{P}}-\gamma_{\mathbb{Q}}-\int_{-1}^{1}x(\nu_{\mathbb{P}}-\nu_{\mathbb{Q}})(dx)\big)^2 & \\
	+T\int_{-\infty}^{\infty}\Big(\Big(\frac{d\nu_\mathbb{P}}{d\nu_\mathbb{Q}}\Big) \log{\Big(\frac{d\nu_\mathbb{P}}{d\nu_\mathbb{Q}}\Big)}- \Big(\frac{d\nu_\mathbb{P}}{d\nu_\mathbb{Q}}\Big)+1\Big) \nu_\mathbb{Q}(dx) & (\alpha =-1)\\
	\frac{T}{2\sigma^2}\big(\gamma_{\mathbb{Q}}-\gamma_{\mathbb{P}}-\int_{-1}^{1}x(\nu_{\mathbb{Q}}-\nu_{\mathbb{P}})(dx)\big)^2 & \\
	+T\int_{-\infty}^{\infty}\Big(\Big(\frac{d\nu_\mathbb{Q}}{d\nu_\mathbb{P}}\Big) \log{\Big(\frac{d\nu_\mathbb{Q}}{d\nu_\mathbb{P}}\Big)}- \Big(\frac{d\nu_\mathbb{Q}}{d\nu_\mathbb{P}}\Big)+1\Big) \nu_\mathbb{P}(dx) & (\alpha =1)
	\end{array}
	\right..
	\end{align}
	
	If $\mathbb{P}$ and $\mathbb{Q}$ correspond to risk-neutral exponential L\'evy models, i.e., verify the martingale condition given by Eq.~(\ref{exp_levy_martingale}), the $\alpha$-divergence between two L\'evy processes is represented with
	\begin{align}
	 \label{a_div_measure_levy_martingale}
	D^{(\alpha )}(\mathbb{P}||\mathbb{Q})=\left\{ 
	\begin{array}{ll}
	\frac{4}{1-\alpha ^{2}}\Big\{1-\exp\big[-\frac{1-\alpha^2}{4}\frac{T}{2\sigma^2}\big(\int_{-\infty}^{\infty}({\rm e}^{x}-1)(\nu_{\mathbb{P}}-\nu_{\mathbb{Q}})(dx)\big)^2 & \\
	-T\int_{-\infty}^{\infty}\Big(\frac{1-\alpha}{2}\Big(\frac{d\nu_\mathbb{P}}{d\nu_\mathbb{Q}}\Big) +\frac{1+\alpha}{2}-\Big(\frac{d\nu_\mathbb{P}}{d\nu_\mathbb{Q}}\Big)^{\frac{1-\alpha}{2}}\Big) \nu_\mathbb{Q}(dx)\big]\Big\} & (\alpha \neq \pm 1)\\ 
	\frac{T}{2\sigma^2}\big(\int_{-\infty}^{\infty}({\rm e}^{x}-1)(\nu_{\mathbb{P}}-\nu_{\mathbb{Q}})(dx)\big)^2 & \\
	+T\int_{-\infty}^{\infty}\Big(\Big(\frac{d\nu_\mathbb{P}}{d\nu_\mathbb{Q}}\Big) \log{\Big(\frac{d\nu_\mathbb{P}}{d\nu_\mathbb{Q}}\Big)}- \Big(\frac{d\nu_\mathbb{P}}{d\nu_\mathbb{Q}}\Big)+1\Big) \nu_\mathbb{Q}(dx) & (\alpha =-1)\\
	\frac{T}{2\sigma^2}\big(\int_{-\infty}^{\infty}({\rm e}^{x}-1)(\nu_{\mathbb{Q}}-\nu_{\mathbb{P}})(dx)\big)^2 & \\
	+T\int_{-\infty}^{\infty}\Big(\Big(\frac{d\nu_\mathbb{Q}}{d\nu_\mathbb{P}}\Big) \log{\Big(\frac{d\nu_\mathbb{Q}}{d\nu_\mathbb{P}}\Big)}- \Big(\frac{d\nu_\mathbb{Q}}{d\nu_\mathbb{P}}\Big)+1\Big) \nu_\mathbb{P}(dx) & (\alpha =1)
	\end{array}
	\right..
	\end{align}
	
	When $\sigma=0$, the $\alpha$-divergence between two L\'evy processes are given by
\begin{align}
	\label{a_div_measure_levy_zv}
	D^{(\alpha )}(\mathbb{P}||\mathbb{Q})=\left\{ 
	\begin{array}{ll}
		\frac{4}{1-\alpha ^{2}}\Big\{1-\exp\big[-T\int_{-\infty}^{\infty}\Big(\frac{1-\alpha}{2}\Big(\frac{d\nu_\mathbb{P}}{d\nu_\mathbb{Q}}\Big) +\frac{1+\alpha}{2}-\Big(\frac{d\nu_\mathbb{P}}{d\nu_\mathbb{Q}}\Big)^{\frac{1-\alpha}{2}}\Big) \nu_\mathbb{Q}(dx)\big]\Big\} & (\alpha \neq \pm 1)\\ 
	T\int_{-\infty}^{\infty}\Big(\Big(\frac{d\nu_\mathbb{P}}{d\nu_\mathbb{Q}}\Big) \log{\Big(\frac{d\nu_\mathbb{P}}{d\nu_\mathbb{Q}}\Big)}- \Big(\frac{d\nu_\mathbb{P}}{d\nu_\mathbb{Q}}\Big)+1\Big) \nu_\mathbb{Q}(dx) & (\alpha =-1)\\
	T\int_{-\infty}^{\infty}\Big(\Big(\frac{d\nu_\mathbb{Q}}{d\nu_\mathbb{P}}\Big) \log{\Big(\frac{d\nu_\mathbb{Q}}{d\nu_\mathbb{P}}\Big)}- \Big(\frac{d\nu_\mathbb{Q}}{d\nu_\mathbb{P}}\Big)+1\Big) \nu_\mathbb{P}(dx) & (\alpha =1)
	\end{array}
	\right..
	\end{align}
	\end{thm}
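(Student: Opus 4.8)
The plan is to compute the right-hand side of Eq.~(\ref{a_div_rn}), namely $D^{(\alpha)}(\mathbb{P}||\mathbb{Q})=\mathbb{E}_{\mathbb{Q}}[f^{(\alpha)}({\rm e}^{U_T})]$, in the three regimes $\alpha\neq\pm1$, $\alpha=-1$, and $\alpha=1$, using Proposition~\ref{prp_delta} to package the nontrivial part of the expectation. For $\alpha\neq\pm1$ I would substitute the explicit generator from Eq.~(\ref{a_div_f}) to obtain $f^{(\alpha)}({\rm e}^{U_T})=\frac{4}{1-\alpha^{2}}\big(\frac{1-\alpha}{2}{\rm e}^{U_T}+\frac{1+\alpha}{2}-{\rm e}^{\frac{1-\alpha}{2}U_T}\big)$ and then take the $\mathbb{Q}$-expectation term by term. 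The first term gives $\frac{1-\alpha}{2}\,\mathbb{E}_{\mathbb{Q}}[{\rm e}^{U_T}]=\frac{1-\alpha}{2}$ by the identity $\mathbb{E}_{\mathbb{Q}}[{\rm e}^{U_T}]=1$ from Proposition~\ref{prp_sato}; the second is the constant $\frac{1+\alpha}{2}$; and the third is $\mathbb{E}_{\mathbb{Q}}[{\rm e}^{\frac{1-\alpha}{2}U_T}]=\phi_T(-i\tfrac{1-\alpha}{2};\boldsymbol{\xi}_{\mathbb{U}})=\exp\big(\Phi_T(-i\tfrac{1-\alpha}{2};\boldsymbol{\xi}_{\mathbb{U}})\big)=\exp\big(-\Delta^{(\alpha)}_T(\mathbb{P}||\mathbb{Q})\big)$, where I use that $(U_t)$ is, under $\mathbb{Q}$, a L\'evy process with triplet $\boldsymbol{\xi}_{\mathbb{U}}$ together with the definition Eq.~(\ref{psi_for_char_fn}). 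Collecting the three contributions yields the compact identity $D^{(\alpha)}(\mathbb{P}||\mathbb{Q})=\frac{4}{1-\alpha^{2}}\big(1-\exp(-\Delta^{(\alpha)}_T(\mathbb{P}||\mathbb{Q}))\big)$, and inserting the three explicit forms of $\Delta^{(\alpha)}_T$ from Eqs.~(\ref{delta_nz}), (\ref{delta_nz_martingale}), and (\ref{delta_zv}) produces the $\alpha\neq\pm1$ lines of Eqs.~(\ref{a_div_measure_levy}), (\ref{a_div_measure_levy_martingale}), and (\ref{a_div_measure_levy_zv}), respectively.

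For $\alpha=-1$ the generator $f^{(-1)}(t)=t\log t-t+1$ is exactly the Kullback--Leibler generator of Eq.~(\ref{kl_div_f}), so $D^{(-1)}(\mathbb{P}||\mathbb{Q})=KL(\mathbb{P}||\mathbb{Q})$ and the stated formula is precisely Proposition~\ref{prp_cont}; alternatively one can let $\alpha\to-1$ in the compact identity, noting $\Delta^{(\alpha)}_T\to0$ there, so that $\frac{4}{1-\alpha^{2}}(1-{\rm e}^{-\Delta^{(\alpha)}_T})$ is an $\infty\cdot0$ indeterminate whose limit (via l'H\^{o}pital, equivalently via $1-{\rm e}^{-\Delta^{(\alpha)}_T}\sim\Delta^{(\alpha)}_T$ and the expansion $\frac{1-\alpha}{2}r+\frac{1+\alpha}{2}-r^{\frac{1-\alpha}{2}}\sim\frac{1+\alpha}{2}(r\log r-r+1)$ with $r=d\nu_{\mathbb{P}}/d\nu_{\mathbb{Q}}$) reproduces the quadratic term together with $T\int(r\log r-r+1)\,\nu_{\mathbb{Q}}(dx)$. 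For $\alpha=1$ I would invoke the $\alpha$-duality Eq.~(\ref{a_duality}), $D^{(1)}(\mathbb{P}||\mathbb{Q})=D^{(-1)}(\mathbb{Q}||\mathbb{P})$, and exchange $\mathbb{P}\leftrightarrow\mathbb{Q}$ in the $\alpha=-1$ formula, which yields the $\alpha=1$ lines. In each case the martingale specialization follows by substituting $\gamma_{\mathbb{P}}-\gamma_{\mathbb{Q}}-\int_{-1}^{1}x(\nu_{\mathbb{P}}-\nu_{\mathbb{Q}})(dx)=-\int_{-\infty}^{\infty}({\rm e}^{x}-1)(\nu_{\mathbb{P}}-\nu_{\mathbb{Q}})(dx)$, exactly as in the proof of Proposition~\ref{prp_delta}, and the $\sigma=0$ case simply drops the quadratic (diffusion) term.

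The main obstacle is the middle term: identifying $\mathbb{E}_{\mathbb{Q}}[{\rm e}^{\frac{1-\alpha}{2}U_T}]$ with the L\'evy--Khintchine transform $\exp(\Phi_T(-i\tfrac{1-\alpha}{2};\boldsymbol{\xi}_{\mathbb{U}}))$ of Eq.~(\ref{log_charac_ftn_levy}) evaluated at the purely imaginary argument $z=-i\tfrac{1-\alpha}{2}$. This requires that the exponential moment be finite, which for $\alpha\in(-1,1)$ follows at once from the log-convexity (H\"older inequality) of $\theta\mapsto\mathbb{E}_{\mathbb{Q}}[{\rm e}^{\theta U_T}]$ on $[0,1]$, since this map equals $1$ at both endpoints $\theta=0$ and $\theta=1$; consequently $\Delta^{(\alpha)}_T(\mathbb{P}||\mathbb{Q})$ is finite, which in turn is consistent with the integrability condition $\int_{-\infty}^{\infty}({\rm e}^{\psi(x)/2}-1)^{2}\nu_{\mathbb{Q}}(dx)<\infty$ of Proposition~\ref{prp_sato} and with the nonnegativity of the integrand $\frac{1-\alpha}{2}r+\frac{1+\alpha}{2}-r^{\frac{1-\alpha}{2}}\ge0$ (which attains its minimum $0$ at $r=1$). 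Once this identification and the term-by-term interchange of expectation are justified, the remainder of the argument is the bookkeeping described above: assembling the compact identity, substituting Proposition~\ref{prp_delta}, and treating the boundary values $\alpha=\pm1$ by the Kullback--Leibler/duality route.
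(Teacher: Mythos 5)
Your proposal is correct and follows essentially the same route as the paper: term-by-term evaluation of $\mathbb{E}_{\mathbb{Q}}[f^{(\alpha)}({\rm e}^{U_T})]$ using $\mathbb{E}_{\mathbb{Q}}[{\rm e}^{U_T}]=1$ and the identification $\mathbb{E}_{\mathbb{Q}}[{\rm e}^{\frac{1-\alpha}{2}U_T}]=\exp(-\Delta^{(\alpha)}_T(\mathbb{P}||\mathbb{Q}))$, followed by substitution of Proposition~\ref{prp_delta}, with the $\alpha=-1$ case reduced to the Kullback--Leibler computation and $\alpha=1$ handled by duality --- all of which the paper also does (it merely carries out the $\alpha=-1$ moment computation explicitly via $\Phi'_T(-i;\boldsymbol{\xi}_{\mathbb{U}})$ rather than citing Proposition~\ref{prp_cont}). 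Your added justification of the finiteness of the exponential moment for $\alpha\in(-1,1)$ via log-convexity of $\theta\mapsto\mathbb{E}_{\mathbb{Q}}[{\rm e}^{\theta U_T}]$ on $[0,1]$ is a welcome detail the paper leaves implicit.
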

	\begin{proof}
	We consider the cases $\sigma \neq 0$ and $\sigma = 0$ separately.
	
	We begin the proof with the case $\sigma \neq 0$. First, the $\alpha$-divergence for $\alpha \neq \pm 1$ is derived by evaluating Eq.~(\ref{a_div_rn}):
	\begin{align}
		D^{(\alpha)}(\mathbb{P}||\mathbb{Q})&=\frac{4}{1-\alpha ^{2}}\mathbb{E}_{\mathbb{Q}}[\frac{1-\alpha}{2}{\rm{e}}^{U_T}+\frac{1+\alpha}{2}-{\rm{e}}^{\frac{1-\alpha}{2}U_T}].\nonumber
	\end{align}
	
	By using the logarithmic characteristic function $\Phi_t(z; \boldsymbol{\xi})$ from Eq.~(\ref{log_charac_ftn_levy}), the $\alpha$-divergence for $\alpha \neq \pm 1$ can be expressed with
	\begin{align}
	\label{a_div_delta}
		D^{(\alpha)}(\mathbb{P}||\mathbb{Q})=\frac{4}{1-\alpha ^{2}}\Big(1-\mathbb{E}_{\mathbb{Q}}[{\rm{e}}^{\frac{1-\alpha}{2}U_T}]\Big)=\frac{4}{1-\alpha ^{2}}\Big(1-{\rm{e}}^{\Phi_T(-i\frac{1-\alpha}{2};\boldsymbol{\xi}_{\mathbb{U}})}\Big).
	\end{align}
	According to Proposition~\ref{prp_delta}, the $\alpha$-divergence in Eq.~(\ref{a_div_delta}) can be rewritten in terms of $\Delta^{(\alpha)}_T(\mathbb{P}|| \mathbb{Q})$ as	
	\begin{align}
	\label{a_div_in_delta}
		D^{(\alpha)}(\mathbb{P}||\mathbb{Q})=\frac{4}{1-\alpha ^{2}}\Big(1-{\rm{e}}^{-\Delta^{(\alpha)}_T(\mathbb{P}||\mathbb{Q})}\Big).
	\end{align}
	By substituting Eq.~(\ref{delta_nz}) into Eq.~(\ref{a_div_in_delta}), the $\alpha$-divergence for $\alpha\neq\pm1$ is given by
	\begin{align}
		D^{(\alpha)}(\mathbb{P}||\mathbb{Q})=\frac{4}{1-\alpha ^{2}}\Big\{1-&\exp\big[-\frac{1-\alpha^2}{4}\frac{T}{2\sigma^2}\big(\gamma_{\mathbb{P}}-\gamma_{\mathbb{Q}}-\int_{-1}^{1}x(\nu_{\mathbb{P}}-\nu_{\mathbb{Q}})(dx)\big)^2\nonumber\\
		&-T\int_{-\infty}^{\infty} \Big(\frac{1-\alpha}{2}\Big(\frac{d\nu_\mathbb{P}}{d\nu_\mathbb{Q}}\Big) +\frac{1+\alpha}{2}-\Big(\frac{d\nu_\mathbb{P}}{d\nu_\mathbb{Q}}\Big)^{\frac{1-\alpha}{2}}\Big) \nu_\mathbb{Q}(dx)\big]\Big\}.\nonumber
	\end{align}
	
	When $\alpha = -1$, the Kullback--Leibler divergence can be obtained in two different ways. The first approach is a direct calculation by plugging the $\alpha = -1$ case in Eq.~(\ref{a_div_f}) into Eq.~(\ref{a_div_rn}), which yields the same result with the derivation presented by Cont and Tankov \cite{cont2004nonparametric}. The second method involves applying L’H\^{o}pital’s rule to the expression for $\alpha$-divergence with $\alpha \neq \pm1$ and taking the limit of $\alpha \to -1$, which also recovers the Kullback--Leibler divergence.
	
	We begin with the first solution, given by:
	\begin{align}
		&D^{(-1)}(\mathbb{P}||\mathbb{Q})=\mathbb{E}_{\mathbb{Q}}[U_T{\rm{e}}^{U_T}-{\rm{e}}^{U_T}+1]=\mathbb{E}_{\mathbb{Q}}[U_T{\rm{e}}^{U_T}]\nonumber\\
		&=-i\frac{d}{dz}\mathbb{E}_{\mathbb{Q}}[{\rm{e}}^{izU_T}]|_{z=-i}=-i\Phi'_T(-i;\boldsymbol{\xi}_{\mathbb{U}}){\rm{e}}^{\Phi_T(-i;\boldsymbol{\xi}_{\mathbb{U}})}=-i\Phi'_T(-i;\boldsymbol{\xi}_{\mathbb{U}})\nonumber,
	\end{align}
	where $\Phi'_T(z;\boldsymbol{\xi}_{\mathbb{U}})$ is directly obtained by differentiating Eq.~(\ref{log_charac_ftn_levy}):
	\begin{align}
		\Phi'_T(z;\boldsymbol{\xi}_{\mathbb{U}})=-T\sigma^2_{\mathbb{U}}z+iT\gamma_{\mathbb{U}}+T\int_{-\infty}^{\infty}(iy{\rm e}^{izy}-iy \mathbbm{1}_{|y|\leq1})\nu_{\mathbb{U}}(dy).\nonumber
	\end{align}
	
	By plugging Eq.~(\ref{u_sigma}), Eq.~(\ref{u_nu}), Eq.~(\ref{u_gamma}), and Eq.~(\ref{eta}), $\Phi'_T(-i;\boldsymbol{\xi}_{\mathbb{U}})$ is obtained as
	\begin{align}
		&-iT^{-1}\Phi'_T(-i;\boldsymbol{\xi}_{\mathbb{U}})=\sigma^2_{\mathbb{U}}+\gamma_{\mathbb{U}}+\int_{-\infty}^{\infty}(y{\rm e}^{y}-y\mathbbm{1}_{|y|\leq1})\nu_{\mathbb{U}}(dy)\nonumber\\
		&=(\sigma\eta)^2+\Big(-\frac{(\sigma\eta)^2}{2}-\int_{-\infty}^{\infty}({\rm e}^y-1-y \mathbbm{1}_{|y|\leq1})(\nu_{\mathbb{Q}}\psi^{-1})(dy)\Big)+\int_{-\infty}^{\infty}(y{\rm e}^{y}-y \mathbbm{1}_{|y|\leq1})(\nu_{\mathbb{Q}}\psi^{-1})(dy)\Big)\nonumber\\
		&=\frac{(\sigma\eta)^2}{2}+\int_{-\infty}^{\infty}(y{\rm e}^{y}-{\rm e}^{y}+1)(\nu_{\mathbb{Q}}\psi^{-1})(dy)\nonumber\\
		&=\frac{(\sigma\eta)^2}{2}+\int_{-\infty}^{\infty} \Big(\Big(\frac{d\nu_\mathbb{P}}{d\nu_\mathbb{Q}}\Big)\log{\Big(\frac{d\nu_\mathbb{P}}{d\nu_\mathbb{Q}}\Big)}+1-\Big(\frac{d\nu_\mathbb{P}}{d\nu_\mathbb{Q}}\Big)\Big) \nu_\mathbb{Q}(dx)\nonumber\\
		&=\frac{1}{2\sigma^2}\big(\gamma_{\mathbb{P}}-\gamma_{\mathbb{Q}}-\int_{-1}^{1}x(\nu_{\mathbb{P}}-\nu_{\mathbb{Q}})(dx)\big)^2+\int_{-\infty}^{\infty} \Big(\Big(\frac{d\nu_\mathbb{P}}{d\nu_\mathbb{Q}}\Big)\log{\Big(\frac{d\nu_\mathbb{P}}{d\nu_\mathbb{Q}}\Big)}+1-\Big(\frac{d\nu_\mathbb{P}}{d\nu_\mathbb{Q}}\Big)\Big) \nu_\mathbb{Q}(dx).\nonumber
	\end{align}
	From the calculation above, the Kullback--Leibler divergence can be expressed as
	\begin{align}
		D^{(-1)}(\mathbb{P}||\mathbb{Q})=\frac{T}{2\sigma^2}\big(\gamma_{\mathbb{P}}-\gamma_{\mathbb{Q}}-\int_{-1}^{1}x(\nu_{\mathbb{P}}-\nu_{\mathbb{Q}})(dx)\big)^2+T\int_{-\infty}^{\infty} \Big(\Big(\frac{d\nu_\mathbb{P}}{d\nu_\mathbb{Q}}\Big)\log{\Big(\frac{d\nu_\mathbb{P}}{d\nu_\mathbb{Q}}\Big)}+1-\Big(\frac{d\nu_\mathbb{P}}{d\nu_\mathbb{Q}}\Big)\Big) \nu_\mathbb{Q}(dx).\nonumber
	\end{align}
	The same result can also be obtained by applying L’H\^{o}pital’s rule.

	Similarly, there are two ways to obtain the $1$-divergence: one by leveraging L’H\^{o}pital’s rule, and the other by direct calculation from the definition of $\alpha$-divergence. Besides, the third approach involves using $\alpha$-duality to derive the dual Kullback--Leibler divergence:
	\begin{align}
		&D^{(1)}(\mathbb{P}||\mathbb{Q})=D^{(-1)}(\mathbb{Q}||\mathbb{P})\nonumber\\
		&=\frac{T}{2\sigma^2}\big(\gamma_{\mathbb{Q}}-\gamma_{\mathbb{P}}-\int_{-1}^{1}x(\nu_{\mathbb{Q}}-\nu_{\mathbb{P}})(dx)\big)^2+T\int_{-\infty}^{\infty} \Big(\Big(\frac{d\nu_\mathbb{Q}}{d\nu_\mathbb{P}}\Big)\log{\Big(\frac{d\nu_\mathbb{Q}}{d\nu_\mathbb{P}}\Big)}+1-\Big(\frac{d\nu_\mathbb{Q}}{d\nu_\mathbb{P}}\Big)\Big) \nu_\mathbb{P}(dx).\nonumber
	\end{align}
	It is straightforward to verify that the same result is obtained from the other two methods: applying L’H\^{o}pital’s rule and performing a direct calculation based on the definition of $\alpha$-divergence.

	We now return to the case of $\sigma = 0$. As noted earlier, when $\sigma = 0$, the first terms in Eq.~(\ref{delta_nz}) and Eq.~(\ref{delta_nz_martingale}) vanish by Proposition~\ref{prp_delta} due to Eq.~(\ref{zv_condition}) and Eq.~(\ref{eta}), allowing Eq.~(\ref{a_div_measure_levy_zv}) to be derived directly from Eq.~(\ref{a_div_measure_levy}) and Eq.~(\ref{a_div_measure_levy_martingale}):
	\begin{align}
		D^{(\alpha)}(\mathbb{P}||\mathbb{Q})=\frac{4}{1-\alpha ^{2}}\Big\{1-\exp\big[-T\int_{-\infty}^{\infty} \Big(\frac{1-\alpha}{2}\Big(\frac{d\nu_\mathbb{P}}{d\nu_\mathbb{Q}}\Big) +\frac{1+\alpha}{2}-\Big(\frac{d\nu_\mathbb{P}}{d\nu_\mathbb{Q}}\Big)^{\frac{1-\alpha}{2}}\Big) \nu_\mathbb{Q}(dx)\big]\Big\}.\nonumber
	\end{align}
	Similarly, the Kullback--Leibler divergence can be obtained as
	\begin{align}
		D^{(-1)}(\mathbb{P}||\mathbb{Q})=T\int_{-\infty}^{\infty} \Big(\frac{d\nu_\mathbb{P}}{d\nu_\mathbb{Q}}\log{\Big(\frac{d\nu_\mathbb{P}}{d\nu_\mathbb{Q}}\Big)}+1-\Big(\frac{d\nu_\mathbb{P}}{d\nu_\mathbb{Q}}\Big)\Big) \nu_\mathbb{Q}(dx),\nonumber
	\end{align}
	and the corresponding dual divergence is given by
	\begin{align}
		D^{(1)}(\mathbb{P}||\mathbb{Q})=T\int_{-\infty}^{\infty} \Big(\frac{d\nu_\mathbb{Q}}{d\nu_\mathbb{P}}\log{\Big(\frac{d\nu_\mathbb{Q}}{d\nu_\mathbb{P}}\Big)}+1-\Big(\frac{d\nu_\mathbb{Q}}{d\nu_\mathbb{P}}\Big)\Big) \nu_\mathbb{P}(dx).\nonumber
	\end{align}
	\end{proof}

	It is straightforward to check that the Kullback--Leibler divergences in Eq.~(\ref{a_div_measure_levy}) and Eq.~(\ref{a_div_measure_levy_martingale}), i.e., the cases with $\alpha = -1$, are identical to those presented in Proposition~\ref{prp_cont}, as derived by Cont and Tankov \cite{cont2004nonparametric}.
	
	The following Corollary states that Theorem~\ref{thm_div_levy} can be expressed in a simpler form by using $\Delta^{(\alpha)}_T(\mathbb{P} || \mathbb{Q})$ as defined in Proposition~\ref{prp_delta}.
	\begin{crl}
	\label{crl_div_levy_delta}
	Let $(X_t, \mathbb{P})_{t\in[0,T]}$ and $(X_t, \mathbb{Q})_{t\in[0,T]}$ be L\'evy processes on $(\Omega,\mathcal{F})$ with L\'evy triplets of $(\sigma,\nu_\mathbb{P},\gamma_{\mathbb{P}})$ and $(\sigma,\nu_\mathbb{Q},\gamma_{\mathbb{Q}})$, respectively. We have the $\alpha$-divergence between two L\'evy processes in $\Delta^{(\alpha)}_T(\mathbb{P}||\mathbb{Q})$ as
	\begin{align}
	 \label{a_div_measure_levy_in_delta}
	D^{(\alpha )}(\mathbb{P}||\mathbb{Q})=\left\{ 
	\begin{array}{ll}
	\frac{4}{1-\alpha ^{2}}\Big(1-{\rm{e}}^{-\Delta^{(\alpha)}_T(\mathbb{P}||\mathbb{Q})}\Big) & (\alpha \neq \pm 1)\\ 
	\lim_{\alpha\to-1}-\frac{2}{\alpha}\frac{\partial_{\alpha} \Delta^{(\alpha)}_T(\mathbb{P}||\mathbb{Q})}{\partial \alpha} & (\alpha =-1)\\
	\lim_{\alpha\to1}-\frac{2}{\alpha}\frac{\partial_{\alpha} \Delta^{(\alpha)}_T(\mathbb{P}||\mathbb{Q})}{\partial \alpha} & (\alpha =1)
	\end{array}
	\right..
	\end{align}
	\end{crl}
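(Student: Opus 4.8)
The plan is to note that the case $\alpha\neq\pm1$ has essentially been proved already inside the proof of Theorem~\ref{thm_div_levy}, and that the two boundary cases follow from it by a single application of L'H\^{o}pital's rule. For $\alpha\neq\pm1$ I would simply combine the identity $D^{(\alpha)}(\mathbb{P}||\mathbb{Q})=\frac{4}{1-\alpha^2}\big(1-{\rm e}^{\Phi_T(-i\frac{1-\alpha}{2};\boldsymbol{\xi}_{\mathbb{U}})}\big)$ from Eq.~(\ref{a_div_delta}) with the defining relation $\Delta^{(\alpha)}_T(\mathbb{P}||\mathbb{Q})=-\Phi_T(-i\frac{1-\alpha}{2};\boldsymbol{\xi}_{\mathbb{U}})$ of Proposition~\ref{prp_delta}, which at once gives $D^{(\alpha)}(\mathbb{P}||\mathbb{Q})=\frac{4}{1-\alpha^2}\big(1-{\rm e}^{-\Delta^{(\alpha)}_T(\mathbb{P}||\mathbb{Q})}\big)$, i.e.\ Eq.~(\ref{a_div_in_delta}). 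No new estimate is needed here beyond what Proposition~\ref{prp_sato} already supplies.

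For the boundary cases I would let $\alpha\to-1$ (respectively $\alpha\to1$) in the formula just obtained. The key observation, recorded in the comment after Proposition~\ref{prp_delta}, is that $\Delta^{(\pm1)}_T(\mathbb{P}||\mathbb{Q})=0$: at $\alpha=\pm1$ the diffusion prefactor $\frac{1-\alpha^2}{4}$ vanishes and so does the integrand $\frac{1-\alpha}{2}\big(\frac{d\nu_\mathbb{P}}{d\nu_\mathbb{Q}}\big)+\frac{1+\alpha}{2}-\big(\frac{d\nu_\mathbb{P}}{d\nu_\mathbb{Q}}\big)^{\frac{1-\alpha}{2}}$. Hence $1-{\rm e}^{-\Delta^{(\alpha)}_T(\mathbb{P}||\mathbb{Q})}\to0$ while $1-\alpha^2\to0$, and the quotient is an indeterminate $0/0$ form. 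Differentiating numerator and denominator in $\alpha$ gives $4\,{\rm e}^{-\Delta^{(\alpha)}_T(\mathbb{P}||\mathbb{Q})}\,\partial_\alpha\Delta^{(\alpha)}_T(\mathbb{P}||\mathbb{Q})$ over $-2\alpha$; evaluating at $\alpha=-1$ (resp.\ $\alpha=1$) and using ${\rm e}^{-\Delta^{(\pm1)}_T}=1$ collapses this to $-\frac{2}{\alpha}\,\partial_\alpha\Delta^{(\alpha)}_T(\mathbb{P}||\mathbb{Q})$ evaluated there, which is exactly the expression in the statement.

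Finally I would check that the limit indeed reproduces the Kullback--Leibler divergence and its $\alpha$-dual from Theorem~\ref{thm_div_levy}: differentiating Eqs.~(\ref{delta_nz}), (\ref{delta_nz_martingale}) and (\ref{delta_zv}) in $\alpha$ with $\partial_\alpha\big(\frac{d\nu_\mathbb{P}}{d\nu_\mathbb{Q}}\big)^{\frac{1-\alpha}{2}}=-\tfrac12\big(\frac{d\nu_\mathbb{P}}{d\nu_\mathbb{Q}}\big)^{\frac{1-\alpha}{2}}\log\frac{d\nu_\mathbb{P}}{d\nu_\mathbb{Q}}$, then multiplying by $-2/\alpha$ and setting $\alpha=-1$ (resp.\ $\alpha=1$), recovers the $\alpha=-1$ (resp.\ $\alpha=1$) rows of Theorem~\ref{thm_div_levy} term by term; for $\alpha=1$ one additionally uses $\frac{d\nu_\mathbb{Q}}{d\nu_\mathbb{P}}=\big(\frac{d\nu_\mathbb{P}}{d\nu_\mathbb{Q}}\big)^{-1}$ and the change of reference measure $\nu_\mathbb{P}=\big(\frac{d\nu_\mathbb{P}}{d\nu_\mathbb{Q}}\big)\nu_\mathbb{Q}$. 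Alternatively this consistency is already built into Theorem~\ref{thm_div_levy}, where the direct evaluation and the L'H\^{o}pital limit were shown to agree, together with continuity of $D^{(\alpha)}$ in $\alpha$. The only substantive point requiring care --- and the main obstacle --- is justifying differentiation of $\alpha\mapsto\Delta^{(\alpha)}_T(\mathbb{P}||\mathbb{Q})$ under the integral sign and the applicability of L'H\^{o}pital near $\alpha=\pm1$; this is handled by a dominated-convergence argument using the condition $\int_{-\infty}^{\infty}({\rm e}^{\psi(x)/2}-1)^2\nu_\mathbb{Q}(dx)<\infty$ from Proposition~\ref{prp_sato}, which bounds the integrand and its $\alpha$-derivative uniformly for $\alpha$ in a neighbourhood of $\pm1$. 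Everything else is bookkeeping.
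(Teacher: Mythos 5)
Your proposal is correct and follows essentially the same route as the paper: the $\alpha\neq\pm1$ case is read off from Eq.~(\ref{a_div_in_delta}) together with the definition of $\Delta^{(\alpha)}_T$ in Proposition~\ref{prp_delta}, and the boundary cases are obtained by L'H\^{o}pital's rule using $\Delta^{(\pm1)}_T(\mathbb{P}||\mathbb{Q})=0$ so that the exponential factor tends to $1$. Your added remarks on consistency with Theorem~\ref{thm_div_levy} and on dominated convergence are more care than the paper itself supplies, but they do not change the argument.
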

	\begin{proof}
	The proof is straightforward. For $\alpha \neq \pm1$, the result follows directly from Eq.~(\ref{a_div_in_delta}). When $\alpha = \pm1$, as discussed in the proof of Theorem~\ref{thm_div_levy}, we apply L’H\^{o}pital’s rule to Eq.~(\ref{a_div_in_delta}) and take the limit of $\alpha \to \pm1$:
	\begin{align}
		D^{(\pm1)}(\mathbb{P}||\mathbb{Q})=\lim_{\alpha\to\pm1}-\frac{2}{\alpha}\frac{\partial_{\alpha} \Delta^{(\alpha)}_T(\mathbb{P}||\mathbb{Q})}{\partial \alpha}{\rm e}^{-\Delta^{(\alpha)}_T(\mathbb{P}||\mathbb{Q})}.
	\end{align}
	Since $\Delta^{(\alpha)}_T(\mathbb{P} || \mathbb{Q})$ tends to zero as $\alpha \to \pm1$, the exponential term $\mathrm{e}^{-\Delta^{(\alpha)}_T(\mathbb{P} || \mathbb{Q})}$ approaches 1, and its deviation from 1 becomes negligible in the limit.
	\end{proof}

	It is worth noting that, for small values of $\Delta^{(\alpha)}_T(\mathbb{P} || \mathbb{Q})$, the $\alpha$-divergence for $\alpha \neq \pm1$ can be approximated by its linear form:
	\begin{align}
	\label{a_div_levy_linear}
		D^{(\alpha )}(\mathbb{P}||\mathbb{Q})\approx\frac{4}{1-\alpha ^{2}}\Delta^{(\alpha)}_T(\mathbb{P}||\mathbb{Q})
	\end{align}
	This approximation also yields the same values of the $\alpha$-divergence in the limits as $\alpha \to \pm1$.

	Having obtained the $\alpha$-divergence for L\'evy processes, we can now derive associated information-geometric structures, such as Fisher information matrix and $\alpha$-connection, from the $\alpha$-divergence.
	\begin{thm}
	\label{thm_geo_levy}
	Let $(X_t, \mathbb{P})_{t\in[0,T]}$ be a L\'evy process on $(\Omega,\mathcal{F})$ with a L\'evy triplet $(\sigma,\nu,\gamma)$. When $\sigma\neq0$, the Fisher information matrix of L\'evy processes is represented with
	\begin{align}
		\label{metric_levy}
		g_{ij}=\frac{T}{\sigma^2}\partial_i \big(\gamma-\int_{-1}^{1}x\nu(dx)\big) \partial_j \big(\gamma-\int_{-1}^{1}x\nu(dx)\big)+T\int_{-\infty}^{\infty}\partial_i \log{\Big(\frac{d\nu}{dx}\Big)} \partial_j \log{\Big(\frac{d\nu}{dx}\Big)} \nu(dx),
	\end{align}
	and the $\alpha$-connection for information geometry of L\'evy processes is given by
	\begin{align}
		\label{conn_levy}
		\begin{split}
		\Gamma^{(\alpha)}_{ij,k}=&\frac{T}{\sigma^2}\partial_i \partial_j \big(\gamma-\int_{-1}^{1}x\nu(dx)\big) \partial_k \big(\gamma-\int_{-1}^{1}x\nu(dx)\big)\\
		&+T\int_{-\infty}^{\infty}\Big(\partial_i \partial_j \log{\Big(\frac{d\nu}{dx}\Big)}+\frac{1-\alpha}{2}\partial_i \log{\Big(\frac{d\nu}{dx}\Big)} \partial_j \log{\Big(\frac{d\nu}{dx}\Big)}\Big)\partial_k \log{\Big(\frac{d\nu}{dx}\Big)} \nu(dx),
		\end{split}
	\end{align}
	where $i,j,$ and $k$ run for the coordinate system $\boldsymbol{\xi}$.
	
	If $\mathbb{P}$ and $\mathbb{Q}$ correspond to risk-neutral exponential L\'evy models, i.e., verify the martingale condition given by Eq.~(\ref{exp_levy_martingale}), the Fisher information matrix of L\'evy processes is represented with
	\begin{align}
		\label{metric_levy_martingale}
		g_{ij}=&\frac{T}{\sigma^2}\partial_i \big(\int_{-\infty}^{\infty}({\rm e}^{x}-1)\nu(dx)\big) \partial_j \big(\int_{-\infty}^{\infty}({\rm e}^{x}-1)\nu(dx)\big)+T\int_{-\infty}^{\infty}\partial_i \log{\Big(\frac{d\nu}{dx}\Big)} \partial_j \log{\Big(\frac{d\nu}{dx}\Big)} \nu(dx),
	\end{align}
	and the $\alpha$-connection for information geometry of L\'evy processes is given by
	\begin{align}
		\label{conn_levy_martingale}
		\begin{split}
		\Gamma^{(\alpha)}_{ij,k}=&\frac{T}{\sigma^2}\partial_i \partial_j \big(\int_{-\infty}^{\infty}({\rm e}^{x}-1)\nu(dx)\big) \partial_k \big(\int_{-\infty}^{\infty}({\rm e}^{x}-1)\nu(dx)\big)\\
		&+T\int_{-\infty}^{\infty}\Big(\partial_i \partial_j \log{\Big(\frac{d\nu}{dx}\Big)}+\frac{1-\alpha}{2}\partial_i \log{\Big(\frac{d\nu}{dx}\Big)} \partial_j \log{\Big(\frac{d\nu}{dx}\Big)}\Big)\partial_k \log{\Big(\frac{d\nu}{dx}\Big)} \nu(dx).
		\end{split}
	\end{align}
	
	When $\sigma=0$, the Fisher information matrix and the $\alpha$-connection for information geometry of L\'evy processes are represented with
	\begin{align}
		\label{metric_levy_zv}
		g_{ij}=&T\int_{-\infty}^{\infty}\partial_i \log{\Big(\frac{d\nu}{dx}\Big)} \partial_j \log{\Big(\frac{d\nu}{dx}\Big)} \nu(dx),\\
		\label{conn_levy_zv}
		\Gamma^{(\alpha)}_{ij,k}=&T\int_{-\infty}^{\infty}\Big(\partial_i \partial_j \log{\Big(\frac{d\nu}{dx}\Big)}+\frac{1-\alpha}{2}\partial_i \log{\Big(\frac{d\nu}{dx}\Big)} \partial_j \log{\Big(\frac{d\nu}{dx}\Big)}\Big)\partial_k \log{\Big(\frac{d\nu}{dx}\Big)} \nu(dx).
	\end{align}

	\end{thm}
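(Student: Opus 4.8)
The plan is to apply the standard construction (Eguchi's) that recovers a Riemannian metric and an affine connection from a divergence function: writing $D^{(\alpha)}(\mathbb{P}(\boldsymbol{\xi})\,\|\,\mathbb{Q}(\tilde{\boldsymbol{\xi}}))$ for the $\alpha$-divergence of Theorem~\ref{thm_div_levy} between two L\'evy processes carrying parameter vectors $\boldsymbol{\xi}$ and $\tilde{\boldsymbol{\xi}}$, one has
\begin{align*}
	g_{ij}&=-\left.\frac{\partial^{2}D^{(\alpha)}(\mathbb{P}(\boldsymbol{\xi})\|\mathbb{Q}(\tilde{\boldsymbol{\xi}}))}{\partial\xi^{i}\,\partial\tilde{\xi}^{j}}\right|_{\tilde{\boldsymbol{\xi}}=\boldsymbol{\xi}}, & \Gamma^{(\alpha)}_{ij,k}&=-\left.\frac{\partial^{3}D^{(\alpha)}(\mathbb{P}(\boldsymbol{\xi})\|\mathbb{Q}(\tilde{\boldsymbol{\xi}}))}{\partial\xi^{i}\,\partial\xi^{j}\,\partial\tilde{\xi}^{k}}\right|_{\tilde{\boldsymbol{\xi}}=\boldsymbol{\xi}}.
\end{align*}
Since $\Delta^{(\alpha)}_{T}(\mathbb{P}\|\mathbb{Q})$ together with its first-order partials in either argument vanishes on the diagonal $\tilde{\boldsymbol{\xi}}=\boldsymbol{\xi}$ — the drift contribution is quadratic in $A(\boldsymbol{\xi})-A(\tilde{\boldsymbol{\xi}})$ with $A(\boldsymbol{\xi})=\gamma-\int_{-1}^{1}x\,\nu(dx)$, and the jump integrand is $\tfrac{1-\alpha^{2}}{4}f^{(\alpha)}$ evaluated at $d\nu_{\mathbb{P}}/d\nu_{\mathbb{Q}}$, which vanishes with vanishing first derivative at the value $1$ — the exponential wrapper $\frac{4}{1-\alpha^{2}}(1-{\rm e}^{-\Delta^{(\alpha)}_{T}})$ of Corollary~\ref{crl_div_levy_delta} only contributes, at the diagonal, terms proportional to $\Delta^{(\alpha)}_{T}$ or to its first partials. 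Hence for the purpose of these second- and third-order derivatives $D^{(\alpha)}$ may be replaced by its linear form $\frac{4}{1-\alpha^{2}}\Delta^{(\alpha)}_{T}(\mathbb{P}\|\mathbb{Q})$, i.e.\ Eq.~(\ref{a_div_levy_linear}).

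Because the extraction operations above are linear in the divergence, and the linearized divergence splits as $\frac{T}{2\sigma^{2}}\big(A(\boldsymbol{\xi})-A(\tilde{\boldsymbol{\xi}})\big)^{2}$ plus a jump term, I would treat the two summands separately. The drift/diffusion summand is a flat divergence built from the single scalar $A$: a direct computation gives $-\partial_{i}\partial_{\tilde{j}}\big(A(\boldsymbol{\xi})-A(\tilde{\boldsymbol{\xi}})\big)^{2}\big|_{\tilde{\boldsymbol{\xi}}=\boldsymbol{\xi}}=2\,\partial_{i}A\,\partial_{j}A$ and $-\partial_{i}\partial_{j}\partial_{\tilde{k}}\big(A(\boldsymbol{\xi})-A(\tilde{\boldsymbol{\xi}})\big)^{2}\big|_{\tilde{\boldsymbol{\xi}}=\boldsymbol{\xi}}=2\,\partial_{i}\partial_{j}A\,\partial_{k}A$, which after multiplication by $\frac{T}{2\sigma^{2}}$ reproduce the first terms of Eqs.~(\ref{metric_levy}) and~(\ref{conn_levy}); in the risk-neutral case one substitutes the martingale form $A(\boldsymbol{\xi})=\int_{-\infty}^{\infty}({\rm e}^{x}-1)\nu(dx)$ from Eq.~(\ref{a_div_measure_levy_martingale}), and when $\sigma=0$ this summand is simply absent.

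For the jump summand I would write the L\'evy densities with respect to Lebesgue measure, $n(x;\boldsymbol{\xi})=d\nu/dx$, so that $d\nu_{\mathbb{P}}/d\nu_{\mathbb{Q}}=n(x;\boldsymbol{\xi})/n(x;\tilde{\boldsymbol{\xi}})$ and $\nu_{\mathbb{Q}}(dx)=n(x;\tilde{\boldsymbol{\xi}})\,dx$; the jump summand then becomes $T$ times Amari's $\alpha$-divergence between the \emph{unnormalized} densities $n(\cdot;\boldsymbol{\xi})$ and $n(\cdot;\tilde{\boldsymbol{\xi}})$. Differentiating under the integral, evaluating at $\tilde{\boldsymbol{\xi}}=\boldsymbol{\xi}$, and substituting $\partial_{i}n=n\,\partial_{i}\ell$ and $\partial_{i}\partial_{j}n=n(\partial_{i}\partial_{j}\ell+\partial_{i}\ell\,\partial_{j}\ell)$ with $\ell=\log(d\nu/dx)$, the mixed second derivative collapses to $T\int\partial_{i}\ell\,\partial_{j}\ell\,\nu(dx)$ (independent of $\alpha$, consistent with the Fisher matrix being stated once), while the third derivative collapses — using $1-\frac{1+\alpha}{2}=\frac{1-\alpha}{2}$ — to $T\int\big(\partial_{i}\partial_{j}\ell+\frac{1-\alpha}{2}\partial_{i}\ell\,\partial_{j}\ell\big)\partial_{k}\ell\,\nu(dx)$. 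Adding the two summands yields Eqs.~(\ref{metric_levy})--(\ref{conn_levy}); the martingale and $\sigma=0$ variants follow by the substitutions noted above, and the $\alpha=\pm1$ (Kullback--Leibler) cases follow either by continuity in $\alpha$ of the resulting formulas or by repeating the computation on the KL branch of Theorem~\ref{thm_div_levy}.

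The step I expect to be the main obstacle is the legitimacy of the term-by-term differentiation of the jump summand: one needs enough regularity of the density family $n(x;\boldsymbol{\xi})$ in $\boldsymbol{\xi}$, together with integrability against $\nu$ of the relevant products of logarithmic derivatives, to dominate the difference quotients and interchange $\partial_{\boldsymbol{\xi}}$ with $\int(\cdot)\,dx$. A secondary point of care is that, since $\nu$ is not a probability measure, one must keep the jump integrand in its full form $\frac{1-\alpha}{2}n(\boldsymbol{\xi})+\frac{1+\alpha}{2}n(\tilde{\boldsymbol{\xi}})-n(\boldsymbol{\xi})^{\frac{1-\alpha}{2}}n(\tilde{\boldsymbol{\xi}})^{\frac{1+\alpha}{2}}$ rather than simplifying it through a normalization identity; fortunately the pieces that a normalization would have removed are constant in one of the two arguments and hence annihilated by the cross-derivatives, so the classical calculation carries over unchanged.
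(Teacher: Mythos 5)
Your proposal is correct and follows essentially the same route as the paper: apply the Eguchi formulas $g_{ij}=-\mathcal{D}(\partial_i,\tilde{\partial}_j)|_{\boldsymbol{\xi}=\tilde{\boldsymbol{\xi}}}$ and $\Gamma^{(\alpha)}_{ij,k}=-\mathcal{D}(\partial_i\partial_j,\tilde{\partial}_k)|_{\boldsymbol{\xi}=\tilde{\boldsymbol{\xi}}}$ to the $\alpha$-divergence of Theorem~\ref{thm_div_levy}, with the martingale and $\sigma=0$ cases obtained by substitution. You supply more detail than the paper does (it merely says ``by substituting \ldots we obtain''), and your observation that the exponential wrapper may be replaced by the linear form $\frac{4}{1-\alpha^2}\Delta^{(\alpha)}_T$ because $\Delta^{(\alpha)}_T$ and its first partials vanish on the diagonal is precisely the content of the paper's Corollary~\ref{crl_geo_levy_delta}.
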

	
	\begin{proof}
	For a given divergence or distance function $\mathcal{D}$, the metric tensor and the connection of information geometry \cite{amari2000methods} are obtained as follows:
	\begin{align}
		\label{ig_metric}
		g_{ij}&=-\mathcal{D}(\partial_i, \tilde{\partial}_j)|_{\tilde{\boldsymbol{\xi}}=\boldsymbol{\xi}},\\
		\label{ig_connection}
		\Gamma_{ij,k}&=-\mathcal{D}(\partial_i \partial_j,\tilde{\partial}_k)|_{\tilde{\boldsymbol{\xi}}=\boldsymbol{\xi}},
	\end{align}
	where $i,j,$ and $k$ run for the coordinate system $\boldsymbol{\xi}$.
	
	As done in Theorem~\ref{thm_div_levy}, we divide the proof into two cases: $\sigma \neq 0$ and $\sigma = 0$.

	We begin with the case of $\sigma \neq 0$. By substituting Eq.~(\ref{a_div_measure_levy}) into Eq.~(\ref{ig_metric}) and Eq.~(\ref{ig_connection}), we obtain Eq.~(\ref{metric_levy}) and Eq.~(\ref{conn_levy}), respectively.
	
	When the martingale condition in Eq.~(\ref{exp_levy_martingale}) is applied to Eq.~(\ref{metric_levy}) and Eq.~(\ref{conn_levy}), the resulting geometric structures are given by Eq.~(\ref{metric_levy_martingale}) and Eq.~(\ref{conn_levy_martingale}). The same geometric results can also be obtained by plugging Eq.~(\ref{a_div_measure_levy_martingale}) into Eq.~(\ref{ig_metric}) and Eq.~(\ref{ig_connection}).

	In the case of $\sigma = 0$, the first terms of the exponents in Eq.~(\ref{a_div_measure_levy}) and Eq.~(\ref{a_div_measure_levy_martingale}) vanish. Consequently, the first terms in Eq.~(\ref{metric_levy}) and Eq.~(\ref{conn_levy}) (or equivalently, in Eq.~(\ref{metric_levy_martingale}) and Eq.~(\ref{conn_levy_martingale})) also become zero. A direct computation using Eq.~(\ref{a_div_measure_levy_zv}) in Eq.~(\ref{ig_metric}) and Eq.~(\ref{ig_connection}) yields the same geometric structures of Eq.~(\ref{metric_levy_zv}) and Eq.~(\ref{conn_levy_zv}), respectively.
	\end{proof}

	Similar to Corollary~\ref{crl_div_levy_delta}, the geometric structures obtained in Theorem~\ref{thm_geo_levy} can also be expressed in terms of $\Delta^{(\alpha)}_T(\mathbb{P} || \mathbb{Q})$.
	\begin{crl}
	\label{crl_geo_levy_delta}
	Let $(X_t, \mathbb{P})_{t\in[0,T]}$ be a L\'evy process on $(\Omega,\mathcal{F})$ with the L\'evy triplet of $(\sigma,\nu,\gamma)$. The Fisher information matrix of L\'evy processes and the $\alpha$-connection for information geometry of L\'evy processes are given by
	\begin{align}
		\label{metric_levy_delta}
		g_{ij}&=-\frac{4}{1-\alpha ^{2}}\partial_i\tilde{\partial}_j\Delta^{(\alpha)}_T(\mathbb{P}||\mathbb{Q})|_{\tilde{\boldsymbol{\xi}}=\boldsymbol{\xi}},\\
		\label{conn_levy_delta}
		\Gamma_{ij,k}^{(\alpha)}&=-\frac{4}{1-\alpha ^{2}}\partial_i\partial_j\tilde{\partial}_k\Delta^{(\alpha)}_T(\mathbb{P}||\mathbb{Q})|_{\tilde{\boldsymbol{\xi}}=\boldsymbol{\xi}},
	\end{align}
	where $i,j,$ and $k$ run for the coordinate system $\boldsymbol{\xi}$.
	\end{crl}
	\begin{proof}
		By substituting the $\alpha$-divergence from Corollary~\ref{crl_geo_levy_delta} into Eq.~(\ref{ig_metric}) and Eq.~(\ref{ig_connection}), we obtain the Fisher information matrix and the $\alpha$-connection for L\'evy processes as follows:
		\begin{align}
		g_{ij}&=-\frac{4}{1-\alpha ^{2}}\Delta^{(\alpha)}_T(\partial_i|| \tilde{\partial}_j){\rm e}^{-\Delta^{(\alpha)}_T(\mathbb{P}||\mathbb{Q})}|_{\tilde{\boldsymbol{\xi}}=\boldsymbol{\xi}},\\
		\Gamma_{ij,k}^{(\alpha)}&=-\frac{4}{1-\alpha ^{2}}\Delta^{(\alpha)}_T(\partial_i \partial_j||\tilde{\partial}_k){\rm e}^{-\Delta^{(\alpha)}_T(\mathbb{P}||\mathbb{Q})}|_{\tilde{\boldsymbol{\xi}}=\boldsymbol{\xi}}.
		\end{align}
		As mentioned earlier, when $\mathbb{P}$ and $\mathbb{Q}$ are identical, i.e., $\boldsymbol{\xi} = \tilde{\boldsymbol{\xi}}$, we have $\Delta^{(\alpha)}_T(\mathbb{P} || \mathbb{Q}) = 0$. This simplifies the expressions for the Fisher information matrix and the $\alpha$-connection to
		\begin{align}
		g_{ij}&=-\frac{4}{1-\alpha ^{2}}\partial_i \tilde{\partial}_j\Delta^{(\alpha)}_T(\mathbb{P}||\mathbb{Q})|_{\tilde{\boldsymbol{\xi}}=\boldsymbol{\xi}},\\
		\Gamma_{ij,k}^{(\alpha)}&=-\frac{4}{1-\alpha ^{2}}\partial_i\partial_j\tilde{\partial}_k\Delta^{(\alpha)}_T(\mathbb{P}||\mathbb{Q})|_{\tilde{\boldsymbol{\xi}}=\boldsymbol{\xi}}.
		\end{align}
	\end{proof}
	It is worth noting that Corollary~\ref{crl_geo_levy_delta} remains valid even when $\alpha = \pm1$, since the factor $\frac{4}{1 - \alpha^2}$ cancels out during differentiation when computing the partial derivatives.
 
	This information geometry of L\'evy processes can be utilized for statistical advantages such as bias reduction \cite{firth1993bias,kosmidis2009bias,kosmidis2010generic} and the construction of Bayesian predictive priors \cite{komaki2006shrinkage, tanaka2008superharmonic,tanaka2018superharmonic, choi2015kahlerian, choi2015geometric, oda2021shrinkage}. 

	For instance, when the information geometry is e-flat where the connection is vanishing with $\alpha=1$, it is possible to employ the penalized log-likelihood approach \cite{firth1993bias}:
	\begin{align}
	\label{penalized_ll}
		l^{*}(\xi)=l(\xi)+\log \mathcal{J},
	\end{align}
	where $l(\xi)$ is log-likelihood and $\mathcal{J}$ is the Jeffreys prior.

	For L\'evy process geometry, the Jeffreys prior is obtained as
	\begin{align}
		\mathcal{J} \propto (\det{g_{ij}})^{1/2}= \bigg(\det{\Big(-\frac{4}{1-\alpha ^{2}}\Delta^{(\alpha)}_T(\partial_i || \tilde{\partial}_j)|_{\tilde{\boldsymbol{\xi}}=\boldsymbol{\xi}}\Big)}\bigg)^{1/2}.
	\end{align}	

	This Jeffreys prior can also be employed as a baseline for constructing Bayesian predictive priors. According to Komaki \cite{komaki2006shrinkage}, the following Bayesian predictive prior $\tilde{\mathcal{J}}$ provides improved predictive performance over the Jeffreys prior:
	\begin{align}
	\label{bayes_predict_prior}
		\tilde{\mathcal{J}}=\rho \mathcal{J},
	\end{align}
	where $\rho$ is a superharmonic function such that $\Delta \rho < 0$. The Laplace--Beltrami operator $\Delta$ is defined as
	\begin{align}
		\Delta \rho = \frac{1}{ (\det{g_{ij}})^{1/2}}\partial_i \big((\det{g_{ij}})^{1/2} g^{ij}\partial_j \rho\big).
	\end{align}

\section{Examples}
\label{sec_app}
	In this section, we present applications of the results developed in Section \ref{sec_ig_levy} to various L\'evy processes. In particular, the examples considered are closely related to models that are widely used in finance.
\subsection{Tempered stable processes}
	We start with the geometric structures of tempered stable processes, which form a subclass of L\'evy processes. Choi \cite{choi2025information} found the information geometry of various tempered stable processes and its statistical applications in parameter estimation and Bayesian prediction. Since the examples used in Choi \cite{choi2025information} include generalized tempered stable (GTS) process, classical tempered stable (CTS) processes and rapidly-decreasing tempered stable processes, we compare our results with those previously established in the literature.

	A tempered stable distribution is characterized by a L\'evy triplet of $(0, \nu, \gamma)$ \cite{rachev2011financial}, that the L\'evy measure $\nu$ is in the form of
	\begin{align}
	\label{levy_measure_ts}
		\nu(dx;\boldsymbol{\xi})=t(x;\boldsymbol{\xi})\nu_{\textrm{stable}}(dx;\boldsymbol{\xi}),
	\end{align}
	where $t(x; \boldsymbol{\xi})$ denotes the tempering function of the tempered stable process, and $\nu_{\textrm{stable}}(dx; \boldsymbol{\xi})$ represents the L\'evy measure of a stable process. The L\'evy measure of stable processes is given by
	\begin{align}
	\label{levy_measure_stable}
		\nu_{\textrm{stable}}(dx;\boldsymbol{\xi})=\Big(\frac{C_+}{x^{a_++1}} \mathbbm{1}_{x>0}(x)+\frac{C_-}{|x|^{a_-+1}} \mathbbm{1}_{x<0}(x)\Big)dx,
	\end{align}
	where $C_{\pm}>0$, and $a_\pm\in(0,2)\setminus \{1\}$.

	Since tempered stable processes are a special case of L\'evy processes with $\sigma = 0$, we can directly apply Theorem~\ref{thm_div_levy}. From Eq.~(\ref{a_div_measure_levy_zv}) in Theorem~\ref{thm_div_levy}, the $\alpha$-divergence between two tempered stable processes is obtained as
	 \begin{align}
	 \label{a_div_measure_ts}
		D^{(\alpha )}(\mathbb{P}||\mathbb{Q})=\left\{ 
		\begin{array}{ll}
		\frac{4}{1-\alpha ^{2}}\Big\{1-\exp{\big[-T\int_{-\infty}^{\infty}\Big(\frac{1-\alpha}{2}\Big(\frac{d\nu_\mathbb{P}}{d\nu_\mathbb{Q}}\Big) +\frac{1+\alpha}{2}-\Big(\frac{d\nu_\mathbb{P}}{d\nu_\mathbb{Q}}\Big)^{\frac{1-\alpha}{2}}\Big) \nu_\mathbb{Q}(dx)\big]}\Big\} & (\alpha \neq \pm 1)\\ 
		T\int_{-\infty}^{\infty}\Big(\Big(\frac{d\nu_\mathbb{P}}{d\nu_\mathbb{Q}}\Big) \log{\Big(\frac{d\nu_\mathbb{P}}{d\nu_\mathbb{Q}}\Big)}- \Big(\frac{d\nu_\mathbb{P}}{d\nu_\mathbb{Q}}\Big)+1\Big) \nu_\mathbb{Q}(dx) & (\alpha =-1)\\
		T\int_{-\infty}^{\infty}\Big(\Big(\frac{d\nu_\mathbb{Q}}{d\nu_\mathbb{P}}\Big) \log{\Big(\frac{d\nu_\mathbb{Q}}{d\nu_\mathbb{P}}\Big)}- \Big(\frac{d\nu_\mathbb{Q}}{d\nu_\mathbb{P}}\Big)+1\Big) \nu_\mathbb{P}(dx) & (\alpha =1)
		\end{array}
		\right..
	\end{align}
		
	It is noteworthy that Eq.~(\ref{a_div_measure_ts}) is consistent with the $\alpha$-divergence presented in Choi \cite{choi2025information}. Specifically, when $\alpha = \pm 1$, the divergence coincides with the Kullback--Leibler divergence and its dual divergence found in the literature. Moreover, for $\alpha \neq \pm 1$, the $\alpha$-divergence in the literature corresponds to the linear term in $T$ from Eq.~(\ref{a_div_measure_levy_zv}) which is also related to Eq.~(\ref{a_div_levy_linear}). These expressions closely agree when $T$ is sufficiently small or when the parameters of $\mathbb{P}$ and $\mathbb{Q}$ are close enough.

	Similarly, the information geometry of tempered stable processes can be derived from Theorem~\ref{thm_geo_levy}. The Fisher information matrix and the $\alpha$-connection for the information geometry of tempered stable processes are given by
	\begin{align}
		\label{metric_ts}
		g_{ij}=&T\int_{-\infty}^{\infty}\partial_i \log{\Big(\frac{d\nu}{dx}\Big)} \partial_j \log{\Big(\frac{d\nu}{dx}\Big)} \nu(dx),\\
		\label{conn_ts}
		\Gamma^{(\alpha)}_{ij,k}=&T\int_{-\infty}^{\infty}\Big(\partial_i \partial_j \log{\Big(\frac{d\nu}{dx}\Big)}+\frac{1-\alpha}{2}\partial_i \log{\Big(\frac{d\nu}{dx}\Big)} \partial_j \log{\Big(\frac{d\nu}{dx}\Big)}\Big)\partial_k \log{\Big(\frac{d\nu}{dx}\Big)} \nu(dx),
	\end{align}
	where $i,j,$ and $k$ run for coordinate system $\boldsymbol{\xi}$. The geometry of tempered stable processes given by Eq.~(\ref{metric_ts}) and Eq.~(\ref{conn_ts}) is also identical to that in Choi \cite{choi2025information}.
		
	As a more concrete example, let us focus on GTS processes. The GTS process uses the following tempering function \cite{rachev2011financial}:
	\begin{align}
	\label{tempering_fns_gts}
		t(x;\boldsymbol{\xi})=\big({\rm e}^{-\lambda_+ x} \mathbbm{1}_{x>0}(x)+{\rm e}^{-\lambda_- |x|} \mathbbm{1}_{x<0}(x)\big),
	\end{align}
	where $\lambda_{\pm}>0$. Plugging Eq.~(\ref{tempering_fns_gts}) to Eq.~(\ref{levy_measure_ts}) leads to the L\'evy measure of GTS processes:
	\begin{align}
	\label{levy_measure_gts}
		\nu(dx;\boldsymbol{\xi})=\Big(\frac{C_+{\rm e}^{-\lambda_+ x}}{x^{a_++1}} \mathbbm{1}_{x>0}(x)+\frac{C_-{\rm e}^{-\lambda_- |x|}}{|x|^{a_-+1}} \mathbbm{1}_{x<0}(x)\Big)dx.
	\end{align}
	
	The Radon--Nikodym derivative of GTS processes is defined as
	\begin{align}
	\label{radon_nikodym_gts}
		\frac{d\nu^\mathbb{P}}{d\nu^\mathbb{Q}}={\rm e}^{-(\lambda_+-\tilde{\lambda}_+)x} \mathbbm{1}_{x>0}(x)+{\rm e}^{-(\lambda_{-}-\tilde{\lambda}_{-})|x|} \mathbbm{1}_{x<0}(x).
	\end{align}
	It is noteworthy that all the parameters of two GTS processes, except for $\lambda_{\pm}$, are identical due to the existence condition of the Radon--Nikodym derivative in Proposition~\ref{prp_sato}. More details on this condition can be found in Rachev et al. \cite{rachev2011financial}, Kim and Lee \cite{kim2007relative}, and Choi \cite{choi2025information}.

	By substituting Eq.~(\ref{levy_measure_gts}) and Eq.~(\ref{radon_nikodym_gts}) into Eq.~(\ref{a_div_measure_ts}), the $\alpha$-divergence between GTS processes is obtained as
	\begin{align}
	\label{a_div_gts}
	D^{(\alpha )}(\mathbb{P}||\mathbb{Q})=\left\{ 
	\begin{array}{ll}
	\frac{4}{1-\alpha^2}\Big\{1-\exp\big[-TC_+\Gamma(-a_+)\big( \frac{1-\alpha}{2} \lambda_+^{a_+}+\frac{1+\alpha}{2} \tilde{\lambda}_+^{a_+}-( \frac{1-\alpha}{2} \lambda_++\frac{1+\alpha}{2} \tilde{\lambda}_+)^{a_+}\big)\\
	-TC_-\Gamma(-a_-)\big( \frac{1-\alpha}{2} \lambda_-^{a_-}+\frac{1+\alpha}{2} \tilde{\lambda}_-^{a_-}-( \frac{1-\alpha}{2} \lambda_-+\frac{1+\alpha}{2} \tilde{\lambda}_-)^{a_-}\big)\big]\Big\} & (\alpha \neq \pm 1)\\ 
	TC_+\Gamma(-a_+)\big((a_+-1)\lambda_+^{a_+}-a_+\tilde{\lambda}_+\lambda_+^{a_+-1}+\tilde{\lambda}_+^{a_+}\big)\\
	+TC_-\Gamma(-a_-)\big((a_--1)\lambda_-^{a_-}-a_-\tilde{\lambda}_-\lambda_-^{a_--1}+\tilde{\lambda}_-^{a_-}\big) & (\alpha =-1)\\
	TC_+\Gamma(-a_+)\big((a_+-1)\tilde{\lambda}_+^{a_+}-a_+\lambda_+\tilde{\lambda}_+^{a_+-1}+\lambda_+^{a_+}\big)\\
	+TC_-\Gamma(-a_-)\big((a_--1)\tilde{\lambda}_-^{a_-}-a_-\lambda_-\tilde{\lambda}_-^{a_--1}+\lambda_-^{a_-}\big) & (\alpha =1)
	\end{array}
	\right..
	\end{align}
	As mentioned above, the $\alpha$-divergences for $\alpha=\pm1$ are matched with those in Choi \cite{choi2025information}. The $\alpha$-divergence for $\alpha\neq\pm1$ in the literature is the linear term in $T$ of Eq.~(\ref{a_div_gts}). This is also related to Eq.~(\ref{a_div_levy_linear}).
	
	Similar to the $\alpha$-divergence, the information geometry of GTS processes is found by Eq.~(\ref{metric_ts}) and Eq.~(\ref{conn_ts}). First of all, the Fisher information matrix of GTS processes is calculated as
	\begin{align}
	\label{metric_gts}
		g_{ij}=\Bigg(
		\begin{array}{cc}
			\frac{TC_+\Gamma(2-a_+)}{\lambda_+^{2-a_+}} &0 \\ 
			0 & \frac{TC_-\Gamma(2-a_-)}{\lambda_-^{2-a_-}}
		\end{array}\Bigg),
	\end{align}
	where $i, j$ run for $\lambda_+$ and $\lambda_-$. Moreover, the $\alpha$-connection of GTS geometry is derived as
	\begin{align}
	\label{a_conn_gts_p}
	\Gamma^{(\alpha)}_{\lambda_+\lambda_+,\lambda_+}&=
	-\frac{1-\alpha}{2}\frac{TC_+\Gamma(3-a_+)}{ \lambda_+^{3-a_+}},\\
	\label{a_conn_gts_m}
	\Gamma^{(\alpha)}_{\lambda_-\lambda_-,\lambda_-}&=
	-\frac{1-\alpha}{2}\frac{TC_-\Gamma(3-a_-)}{ \lambda_-^{3-a_-}}.
	\end{align}
	The other components are vanishing. It is easy to check that the manifold is e-flat.

	It is worth noting that the Fisher information matrix and the $\alpha$-connection for GTS processes coincide with those derived in Choi \cite{choi2025information}. Moreover, the literature introduced various statistical applications of GTS geometry, including penalized likelihood estimation for bias reduction \cite{firth1993bias, kosmidis2009bias, kosmidis2010generic} and Bayesian predictive priors \cite{komaki2006shrinkage}. Since the geometric results found here are equivalent, the same conclusions apply to these statistical methodologies.

\subsection{CTS processes and CGMY model}
	Another example discussed in Choi \cite{choi2025information} is classical tempered stable (CTS) processes, also known in the finance literature as the CGMY model \cite{carr2002fine}.

	The CTS process is a special case of the GTS process, obtained by imposing the following condition:
	\begin{align}
	\label{cts_condition}
		\left\{ 
		\begin{array}{ll}
			a_+=a_-=a\\
			C_+=C_-=C
		\end{array}
		\right..
	\end{align}
	This condition implies that both the upper and lower tails are controlled by the same tail index and scale parameter.

	By applying the CTS condition of Eq.~(\ref{cts_condition}) to the GTS L\'evy measure in Eq.~(\ref{levy_measure_gts}), the L\'evy measure of CTS processes is represented with
	\begin{align}
	\label{levy_measure_cts}
		\nu(dx;\boldsymbol{\xi})=C\Big(\frac{{\rm e}^{-\lambda_{+}x}}{x^{a+1}} \mathbbm{1}_{x>0}(x)+\frac{{\rm e}^{-\lambda_{-} |x|}}{|x|^{a+1}} \mathbbm{1}_{x<0}(x)\Big)dx,
	\end{align}
	where $C, \lambda_+, \lambda_-$ are positive, and $a\in(0,2)\setminus \{1\}$.
	
	It is straightforward to check that the Radon--Nikodym derivative of CTS processes is identical to that in Eq.~(\ref{radon_nikodym_gts}):
	\begin{align}
	\label{radon_nikodym_cts}
		\frac{d\nu^\mathbb{P}}{d\nu^\mathbb{Q}}={\rm e}^{-(\lambda_+-\tilde{\lambda}_+)x} \mathbbm{1}_{x>0}(x)+{\rm e}^{-(\lambda_{-}-\tilde{\lambda}_{-})|x|} \mathbbm{1}_{x<0}(x).
	\end{align}
	As mentioned for GTS processes, the existence of the Radon--Nikodym derivative under Proposition~\ref{prp_sato} requires that all the parameters of two CTS processes, except for $\lambda_\pm$, are identical. Additional details regarding this condition can be found in the literature \cite{rachev2011financial, kim2007relative, choi2025information}.

	The $\alpha$-divergence between CTS processes can be derived either by applying Theorem~\ref{thm_div_levy} or by substituting the CTS condition of Eq.~(\ref{cts_condition}) into Eq.~(\ref{a_div_gts}). Leveraging the latter approach, which is also more straightforward, the $\alpha$-divergence is obtained as
	\begin{align}
	\label{a_div_cts}
	D^{(\alpha )}(\mathbb{P}||\mathbb{Q})=\left\{ 
	\begin{array}{ll}
	\frac{4}{1-\alpha^2}\Big\{1-\exp\big[-TC\Gamma(-a)\big(\big( \frac{1-\alpha}{2} \lambda_+^{a}+\frac{1+\alpha}{2} \tilde{\lambda}_+^{a}-( \frac{1-\alpha}{2} \lambda_++\frac{1+\alpha}{2} \tilde{\lambda}_+)^{a}\big)\\
	+\big( \frac{1-\alpha}{2} \lambda_-^{a}+\frac{1+\alpha}{2} \tilde{\lambda}_-^{a}-( \frac{1-\alpha}{2} \lambda_-+\frac{1+\alpha}{2} \tilde{\lambda}_-)^{a}\big)\big)\big]\Big\} & (\alpha \neq \pm 1)\\ 
	TC\Gamma(-a)\Big(\big((a-1)\lambda_+^{a}-a\tilde{\lambda}_+\lambda_+^{a-1}+\tilde{\lambda}_+^{a}\big)
	+\big((a-1)\lambda_-^{a}-a\tilde{\lambda}_-\lambda_-^{a-1}+\tilde{\lambda}_-^{a}\big)\Big) & (\alpha =-1)\\
	TC\Gamma(-a)\Big(\big((a-1)\tilde{\lambda}_+^{a}-a\lambda_+\tilde{\lambda}_+^{a-1}+\lambda_+^{a}\big)
	+\big((a-1)\tilde{\lambda}_-^{a}-a\lambda_-\tilde{\lambda}_-^{a-1}+\lambda_-^{a}\big)\Big) & (\alpha =1)
	\end{array}
	\right..
	\end{align}
	It is also evident that the same result can be obtained directly from Theorem~\ref{thm_div_levy}. Moreover, similar to the case of GTS processes, the $\alpha$-divergence in Eq.~(\ref{a_div_cts}) coincides with the results in Choi \cite{choi2025information} when $\alpha = \pm 1$. For $\alpha \neq \pm 1$, the expression in the literature corresponds to the linear term in $T$ from Eq.~(\ref{a_div_cts}), also related to the linear version of the $\alpha$-divergence given by Eq.~(\ref{a_div_levy_linear}).

	The Fisher information matrix and the $\alpha$-connection for CTS processes are also identical to those derived in Choi \cite{choi2025information}. Specifically, plugging the CTS condition of Eq.~(\ref{cts_condition}) into Eq.~(\ref{metric_gts}) yields the information geometry of CTS processes. The Fisher information matrix of the CTS geometry is calculated as
	\begin{align}
	\label{metric_cts}
		g_{ij}=\Bigg(
		\begin{array}{cc}
			\frac{TC\Gamma(2-a)}{\lambda_+^{2-a}} &0 \\ 
			0 & \frac{TC\Gamma(2-a)}{\lambda_-^{2-a}}
		\end{array}\Bigg),
	\end{align}
	where $i,j$ run for $\lambda_+$ and $\lambda_-$. Similarly, the $\alpha$-connection of CTS process geometry is given by
	\begin{align}
	\label{a_conn_cts_p}
	\Gamma^{(\alpha)}_{\lambda_+\lambda_+,\lambda_+}&=
	-\frac{1-\alpha}{2}\frac{TC\Gamma(3-a)}{ \lambda_+^{3-a}},\\
	\label{a_conn_cts_m}
	\Gamma^{(\alpha)}_{\lambda_-\lambda_-,\lambda_-}&=
	-\frac{1-\alpha}{2}\frac{TC\Gamma(3-a)}{ \lambda_-^{3-a}}.
	\end{align}
	All other components vanish. It is easy to check that the geometry is e-flat. It is obvious that the geometry derived directly from Theorem~\ref{thm_geo_levy} coincides with the results in Eq.~(\ref{metric_cts}), Eq.~(\ref{a_conn_cts_p}), and Eq.~(\ref{a_conn_cts_m}).

	We can apply the information geometry of CTS processes for statistical advantages. 
	
	First, since the geometry of CTS processes is e-flat, the penalized log-likelihood approach \cite{firth1993bias} can be considered as described earlier. From the Fisher information matrix in Eq.~(\ref{metric_cts}), the Jeffreys prior of CTS processes is represented with
	\begin{align}
		\label{jeffreys_cts}
		\mathcal{J}(\xi)\propto \frac{TC\Gamma(2-a)}{\sqrt{(\lambda_+\lambda_-)^{2-a}}}.
	\end{align}
	This Jeffreys prior can be used for the penalized log-likelihood in Eq.~(\ref{penalized_ll}).
	
	Moreover, we can find Komaki's Bayesian predictive priors \cite{komaki2006shrinkage} by searching potential candidates for a superharmonic function $\rho$ from the geometry. One example for a superharmonic function is as follows:
	\begin{align}
		\rho_{\pm}=\lambda_{\pm}^k,
	\end{align}
	where $\min(0,a-1) < k < \max(0,a-1)$. 
	Another candidate is the positive linear combination or multiplication of the functions above:
	\begin{align}
		\rho_1&= c_1\rho_++c_2\rho_-,\\
		\rho_2&= \rho_+\rho_-,
	\end{align}
	where $c_+>0$ and $c_->0$. These superharmonic functions are plugged to Eq.~(\ref{bayes_predict_prior}) for Bayesian predictive priors.

\subsection{Variance gamma processes}
	Variance gamma (VG) processes \cite{madan1998variance} are also widely used in financial modeling. The VG process is a L\'evy process characterized by a triplet $(0, \nu, \gamma)$. However, it does not fall within the class of tempered stable processes.

	The L\'evy measure of variance gamma processes is represented with
	\begin{align}
	\label{levy_measure_vg}
		\nu(dx;\boldsymbol{\xi})=C\Big(\frac{{\rm e}^{-\lambda_{+}x}}{x} \mathbbm{1}_{x>0}(x)+\frac{{\rm e}^{-\lambda_{-} |x|}}{|x|} \mathbbm{1}_{x<0}(x)\Big)dx,
	\end{align}
	where $C, \lambda_+, \lambda_-$ are positive.
	
	However, the integral of Eq.~(\ref{levy_measure_vg}) over $(-\infty, \infty)$ diverges, making it ill-defined for computing the $\alpha$-divergence directly. To regularize the expression, we introduce a small positive parameter $a$ and consider the modified L\'evy measure:
	\begin{align}
	\label{levy_measure_vg_mod}
		\nu(dx;\boldsymbol{\xi})=C\Big(\frac{{\rm e}^{-\lambda_{+}x}}{x^{1+a}} \mathbbm{1}_{x>0}(x)+\frac{{\rm e}^{-\lambda_{-} |x|}}{|x|^{1+a}} \mathbbm{1}_{x<0}(x)\Big)dx.
	\end{align}
	This modified measure is integrable over the real line, allowing the use of standard divergence calculations, and the original measure can be recovered by taking the limit of $a\to0$.

	It is noteworthy that Eq.~(\ref{levy_measure_vg_mod}) coincides with the L\'evy measure of the CTS distribution, Eq.~(\ref{levy_measure_cts}). This structural similarity may suggest that one could formally obtain the VG process as a limiting case of the CTS process by setting $a = 0$. While this makes it tempting to directly substitute $a = 0$ into the $\alpha$-divergence expression for CTS processes, Eq.~(\ref{a_div_cts}), extra caution is necessary as the Gamma function is undefined at zero. Additionally, $a=0$ is not allowed for CTS and GTS processes.

	To address the behavior of the Gamma function near $a = 0$, we take the following approximation:
	\begin{align}
	\label{vg_approx}
	\left\{ 
		\begin{array}{ll}
		\Gamma(a)&\approx \frac{1}{a}-\gamma+\mathcal{O}(a)\\
		\lambda^{a}&\approx 1+a\log{\lambda}+\mathcal{O}(a^2)
		\end{array}
		\right.,
	\end{align}
	where $\gamma$ is the Euler--Mascheroni constant.

	By applying Eq.~(\ref{vg_approx}) to Eq.~(\ref{a_div_cts}) and taking the limit of $a \to 0$, we obtain the $\alpha$-divergence between two variance gamma processes as
	\begin{align} 
	\label{a_div_vg}
		D^{(\alpha )}(\mathbb{P}||\mathbb{Q})=\left\{ 
		\begin{array}{ll}
		\frac{4}{1-\alpha^2}\Big\{1-\exp\big[-TC\big(\big( -\frac{1-\alpha}{2} \log{\lambda_+}-\frac{1+\alpha}{2} \log{\tilde{\lambda}}+\log{( \frac{1-\alpha}{2} \lambda_++\frac{1+\alpha}{2} \tilde{\lambda}_+)\big)}\\
		+\big( -\frac{1-\alpha}{2}\log \lambda_--\frac{1+\alpha}{2}\log \tilde{\lambda}_- + \log( \frac{1-\alpha}{2} \lambda_-+\frac{1+\alpha}{2} \tilde{\lambda}_-)\big)\big)\big]\Big\} & (\alpha \neq \pm 1)\\ 
		TC\big((\tilde{\lambda}_+/\lambda_+-1+\log{\lambda_+}-\log{\tilde{\lambda}_+})+(\tilde{\lambda}_-/\lambda_--1+\log{\lambda_-}-\log{\tilde{\lambda}_-})\big) & (\alpha =-1)\\
		TC\big((\lambda_+/\tilde{\lambda}_+-1+\log{\tilde{\lambda}_+}-\log{\lambda_+})+(\lambda_-/\tilde{\lambda}_--1+\log{\tilde{\lambda}_-}-\log{\lambda_-})\big) & (\alpha =1)
		\end{array}
		\right..
	\end{align}
	
	From the $\alpha$-divergence of Eq.~(\ref{a_div_vg}), the Fisher information matrix of variance gamma processes is represented with 
	\begin{align}
	\label{metric_vg}
		g_{ij}=\Bigg(
		\begin{array}{cc}
			\frac{TC}{\lambda_+^{2}} &0 \\ 
			0 & \frac{TC}{\lambda_-^{2}}
		\end{array}\Bigg),
	\end{align}
	where $i,j,$ and $k$ run for $\lambda_{\pm}$. The same metric tensor is also derived from taking the limit of $a\to0$ to Eq.~(\ref{metric_cts}). 
	
	Similarly, the $\alpha$-connection of variance gamma process geometry is given by
	\begin{align}
	\label{a_conn_vg_p}
	\Gamma^{(\alpha)}_{\lambda_+\lambda_+,\lambda_+}&=
	-(1-\alpha)\frac{TC}{ \lambda_+^{3}},\\
	\label{a_conn_vg_m}
	\Gamma^{(\alpha)}_{\lambda_-\lambda_-,\lambda_-}&=
	-(1-\alpha)\frac{TC}{ \lambda_-^{3}}.
	\end{align}
	 All other components are vanishing. It is easy to check that the variance gamma geometry is e-flat.
	
	Similar to the GTS and CTS processes, the information geometry of variance gamma processes enables a range of statistical applications. Since the VG geometry is also e-flat, we can consider the penalized likelihood method for bias reduction \cite{firth1993bias}. Based on the Fisher information matrix for variance gamma processes given by Eq.~(\ref{metric_vg}), the corresponding Jeffreys prior is obtained as
	\begin{align}
		\label{jeffreys_vg}
		\mathcal{J}(\xi)\propto \frac{TC}{\lambda_+\lambda_-},
	\end{align}
	and it can be applied to Eq.~(\ref{penalized_ll}) for bias reduction.

	This Jeffreys prior for variance gamma processes can be leveraged to construct Bayesian predictive priors, as discussed in Komaki \cite{komaki2006shrinkage}. Moreover, superharmonic functions for variance gamma processes can be obtained by taking the limit of $a\to0$ in the corresponding superharmonic functions for CTS processes:
	\begin{align}
		\rho_{\pm}=\lambda_{\pm}^k,
	\end{align}
	where $-1 < k < 0$. 
	Similarly, the positive linear combination or multiplication of the functions above can be considered:
	\begin{align}
		\rho_1&= c_1\rho_++c_2\rho_-,\\
		\rho_2&= \rho_+\rho_-,
	\end{align}
	where $c_+>0$ and $c_->0$.

\subsection{Merton model}
	The Merton model \cite{merton1976option} extends the Black-Scholes framework by incorporating discontinuous jumps in asset prices through a combination of geometric Brownian motion and a Poisson jump component. The model simulates not only routine market fluctuations but also abrupt price changes. The consideration on those components provides a more robust explanation for the volatility smile and the mispricing of out-of-the-money options than the standard Black-Scholes model.
	
	Unlike the tempered stable processes and variance gamma processes studied earlier, the Merton model is a L\'evy process with a L\'evy triplet of $(\sigma, \nu, \gamma)$, i.e., $\sigma\neq0$. Its L\'evy measure is given by
	\begin{align}
		\label{levy_measure_stable}
		\nu(dx)=\frac{\lambda}{\delta\sqrt{2\pi}}\exp{\Big(-\frac{(x-m)^2}{2\delta^2}\Big)}dx,
	\end{align}
	where $\lambda,\delta>0$ and $m\in \mathbb{R}$.
	
	Let us consider two risk-neutral Merton models with the same volatility of $\sigma$. Since the Merton model is a risk-neutral exponential L\'evy model, 
the $\alpha$-divergence between two Merton models can be derived from Eq.~(\ref{a_div_measure_levy_martingale}) in Theorem~\ref{thm_div_levy}:
	\begin{align}
	 \label{a_div_measure_merton_martingale}
	D^{(\alpha )}(\mathbb{P}||\mathbb{Q})=\left\{ 
	\begin{array}{ll}
	\frac{4}{1-\alpha ^{2}}\Big(1-{\rm{e}}^{-\Delta^{(\alpha)}_T(\mathbb{P}||\mathbb{Q})}\Big) & (\alpha \neq \pm 1)\\ 
	\frac{T}{2\sigma^2}\big(\lambda(\exp{(m+\frac{\delta^2}{2})}-1)-\tilde{\lambda}(\exp{(\tilde{m}+\frac{\tilde{\delta}^2}{2})}-1)\big)^2+ & \\
	T\big((\tilde{\lambda}-\lambda)+\lambda(\ln{(\frac{\lambda}{\delta})}-\ln{(\frac{\tilde{\lambda}}{\tilde{\delta}})}-\frac{1}{2}+\frac{(m-\tilde{m})^2+\delta^2}{2\tilde{\delta}^2})\big) & (\alpha =-1)\\
	\frac{T}{2\sigma^2}\big(\lambda(\exp{(m+\frac{\delta^2}{2})}-1)-\tilde{\lambda}(\exp{(\tilde{m}+\frac{\tilde{\delta}^2}{2})}-1)\big)^2+ & \\
	T\big((\lambda-\tilde{\lambda})+\tilde{\lambda}(\ln{(\frac{\tilde{\lambda}}{\tilde{\delta}})}-\ln{(\frac{\lambda}{\delta})}-\frac{1}{2}+\frac{(m-\tilde{m})^2+\tilde{\delta}^2}{2\delta^2})\big) & (\alpha =1)
	\end{array}
	\right.,
	\end{align}
	where $\Delta_T^{(\alpha)}(\mathbb{P}||\mathbb{Q})$ is given by
	\begin{align}
		\Delta_T^{(\alpha)}(\mathbb{P}||\mathbb{Q})&=\frac{1-\alpha^2}{4}\frac{T}{2\sigma^2}\big(\lambda(\exp{(m+\frac{\delta^2}{2})}-1)-\tilde{\lambda}(\exp{(\tilde{m}+\frac{\tilde{\delta}^2}{2})}-1)\big)^2\nonumber\\
		&\quad + T\Big(\frac{1-\alpha}{2}\lambda+\frac{1+\alpha}{2}\tilde{\lambda}-\Big(\frac{\lambda}{\delta}\Big)^{\frac{1-\alpha}{2}}\Big(\frac{\tilde{\lambda}}{\tilde{\delta}}\Big)^{\frac{1+\alpha}{2}}(\frac{1-\alpha}{2}\frac{1}{\delta^2}+\frac{1+\alpha}{2}\frac{1}{\tilde{\delta}^2})^{-1/2}\nonumber\\
		&\exp{\big[\frac{1}{2}(\frac{1-\alpha}{2}\frac{1}{\delta^2}+\frac{1+\alpha}{2}\frac{1}{\tilde{\delta}^2})^{-1}(\frac{1-\alpha}{2}\frac{m}{\delta^2}+\frac{1+\alpha}{2}\frac{\tilde{m}}{\tilde{\delta}^2})^2-\frac{1}{2}(\frac{1-\alpha}{2}\frac{m^2}{\delta^2}+\frac{1+\alpha}{2}\frac{\tilde{m}^2}{\tilde{\delta}^2})\big]}\Big).\nonumber
	\end{align}
	It is straightforward to check that the Kullback--Leibler divergence in Eq.~(\ref{a_div_measure_merton_martingale}) is identical to that in Cont and Tankov \cite{cont2004nonparametric}.
	
	From Eq.~(\ref{metric_levy_martingale}) in Theorem~\ref{thm_geo_levy} or Eq.~(\ref{metric_levy_delta}) in Corollary~\ref{crl_geo_levy_delta}, the Fisher information matrix of the Merton model is given by
	\begin{align}
		\label{metric_merton}
		g_{ij} = T\begin{pmatrix} 
		\frac{1}{\lambda}+\frac{(e^{m+\delta^2/2}-1)^2}{\sigma^2} & 
		\frac{\lambda(e^{m+\delta^2/2}-1)e^{m+\delta^2/2}}{\sigma^2} & 
		\frac{\lambda\delta(e^{m+\delta^2/2}-1)e^{m+\delta^2/2}}{\sigma^2} \\
		\frac{\lambda(e^{m+\delta^2/2}-1)e^{m+\delta^2/2}}{\sigma^2} & 
		\frac{\lambda}{\delta^2}+\frac{\lambda^2 e^{2(m+\delta^2/2)}}{\sigma^2} & 
		\frac{\lambda^2\delta e^{2(m+\delta^2/2)}}{\sigma^2} \\
		\frac{\lambda\delta(e^{m+\delta^2/2}-1)e^{m+\delta^2/2}}{\sigma^2} & 
		\frac{\lambda^2\delta e^{2(m+\delta^2/2)}}{\sigma^2} & 
		\frac{2\lambda}{\delta^2}+\frac{\lambda^2\delta^2 e^{2(m+\delta^2/2)}}{\sigma^2}
		\end{pmatrix},
	\end{align}
	where the coordinate system is $(\lambda, m, \delta)$.
	
	Similarly, the $\alpha$-connection for information geometry of the Merton model is derived from Eq.~(\ref{conn_levy_martingale}) in Theorem~\ref{thm_geo_levy} or Eq.~(\ref{conn_levy_delta}) in Corollary~\ref{crl_geo_levy_delta}:
	\begin{align}
		\label{conn_merton}
		\begin{aligned}
		&\Gamma^{(\alpha)}_{\lambda\lambda,\lambda}=-\frac{T(1+\alpha)}{2\lambda^2},\textrm{ }
		\Gamma^{(\alpha)}_{\lambda m,\lambda}=T\frac{e^{m+\frac{\delta^2}{2}}\left(e^{m+\frac{\delta^2}{2}}-1\right)}{\sigma^2},\textrm{ }
		\Gamma^{(\alpha)}_{\lambda\delta,\lambda}=T\frac{\delta e^{m+\frac{\delta^2}{2}}\left(e^{m+\frac{\delta^2}{2}}-1\right)}{\sigma^2},\\
		&\Gamma^{(\alpha)}_{mm,\lambda}	=T\left(\frac{\lambda\left(e^{2m+\delta^2}-e^{m+\frac{\delta^2}{2}}\right)}{\sigma^2}-\frac{1+\alpha}{2\delta^2}\right),\textrm{ }
		\Gamma^{(\alpha)}_{m\delta,\lambda}=	T\frac{\lambda\delta e^{m+\frac{\delta^2}{2}}\left(e^{m+\frac{\delta^2}{2}}-1\right)}{\sigma^2},\textrm{ }\\
		&\Gamma^{(\alpha)}_{\delta\delta,\lambda}=	T\left(\frac{\lambda(\delta^2+1)\left(e^{2m+\delta^2}-e^{m+\frac{\delta^2}{2}}\right)}{\sigma^2}-\frac{1+\alpha}{\delta^2}\right),\\
		&\Gamma^{(\alpha)}_{\lambda\lambda,m}=0,\textrm{ }
		\Gamma^{(\alpha)}_{\lambda m,m}=T\left(\frac{\lambda e^{2m+\delta^2}}{\sigma^2}+\frac{1-\alpha}{2\delta^2}\right),\textrm{ }
		\Gamma^{(\alpha)}_{\lambda\delta,m}=T\frac{\lambda\delta e^{2m+\delta^2}}{\sigma^2},\\
		&\Gamma^{(\alpha)}_{mm,m}=T\frac{\lambda^2 e^{2m+\delta^2}}{\sigma^2},\textrm{ }
		\Gamma^{(\alpha)}_{m\delta,m}=T\left(\frac{\lambda^2\delta e^{2m+\delta^2}}{\sigma^2}-\frac{(1+\alpha)\lambda}{\delta^3}\right),\textrm{ }
		\Gamma^{(\alpha)}_{\delta\delta,m}=T\frac{\lambda^2(\delta^2+1)e^{2m+\delta^2}}{\sigma^2},\\
		&\Gamma^{(\alpha)}_{\lambda\lambda,\delta}=0,\textrm{ }
		\Gamma^{(\alpha)}_{\lambda m,\delta}=T\frac{\lambda\delta e^{2m+\delta^2}}{\sigma^2},\textrm{ }
		\Gamma^{(\alpha)}_{\lambda\delta,\delta}=	T\left(\frac{\lambda\delta^2 e^{2m+\delta^2}}{\sigma^2}+\frac{1-\alpha}{\delta^2}	\right),\\
		&\Gamma^{(\alpha)}_{mm,\delta}=T\left(\frac{\lambda^2\delta e^{2m+\delta^2}}{\sigma^2}+\frac{(1-\alpha)\lambda}{\delta^3}\right),\textrm{ }
		\Gamma^{(\alpha)}_{m\delta,\delta}=T\frac{\lambda^2\delta^2 e^{2m+\delta^2}}{\sigma^2},\\
		&\Gamma^{(\alpha)}_{\delta\delta,\delta}=T\left(\frac{\lambda^2\delta(\delta^2+1)e^{2m+\delta^2}}{\sigma^2}-\frac{2\lambda(1+2\alpha)}{\delta^3}\right).
		\end{aligned}
	\end{align}

	Similar to the other processes in this paper, statistical benefits of information geometry can be discussed. First of all, the penalized likelihood estimation is not available for the Merton model because none of the Merton model geometry nor its submanifolds are e-flat. However, we still can search the Bayesian predictive priors of the Merton model although the metric tensor and the Jeffreys priors of the Merton model are much more complicated than the other processes we covered earlier.
	
\section{Conclusion}
\label{sec_con}
	In this paper, we have developed an information-geometric framework for L\'evy processes, significantly extending precedent works focused on the information geometry of tempered stable processes and the Kullback--Leibler divergence of L\'evy processes. By deriving the $\alpha$-divergence of L\'evy processes, we constructed the Fisher information matrix and the $\alpha$-connection for this broad and important class of stochastic processes. Statistical advantages from obtaining the information geometry were also discussed.

	Moreover, we demonstrated this geometric framework of several prominent L\'evy models, widely used in finance, including tempered stable processes, the CGMY model, variance gamma processes, and the Merton model. For these models, we characterized their information geometry and discussed potential applications to bias-reduced estimation and Bayesian predictive priors that are tools relevant to model calibration, estimation robustness, and risk measurement in the contexts of quantitative finance. These examples exhibit how information geometry functions not only as a theoretical framework but also as a practical methodology in quantitative modeling.

	In conclusion, this work contributes to the growing interdisciplinary study of stochastic process theory, information geometry, applied probability, and quantitative finance. By embedding L\'evy processes within the information-geometric framework, we offer a novel foundation for analyzing and comparing complex models with jumps and non-Gaussian behavior. We hope that this study inspires further research at the intersection of geometry, statistics, stochastic processes and financial modeling.
	
\section*{Acknowledgment}
	We thank Hyangju Kim and Young Shin Kim for useful discussions on L\'evy processes.
	
\bibliographystyle{plain}
\bibliography{ig_levy}

\begin{thebibliography}{10}

\bibitem{amari2000methods}
Shun-ichi Amari and Hiroshi Nagaoka.
\newblock {\em Methods of information geometry}, volume 191.
\newblock American Mathematical Soc., 2000.

\bibitem{anand2017equity}
Abhinav Anand, Tiantian Li, Tetsuo Kurosaki, and Young~Shin Kim.
\newblock {The equity risk posed by the too-big-to-fail banks: a Foster--Hart estimation}.
\newblock {\em Annals of Operations Research}, 253:21--41, 2017.

\bibitem{barbaresco2006information}
Fr{\'e}d{\'e}ric Barbaresco.
\newblock Information intrinsic geometric flows.
\newblock In {\em AIP Conference Proceedings}, volume 872, pages 211--218. American Institute of Physics, 2006.

\bibitem{barbaresco2012information}
Fr{\'e}d{\'e}ric Barbaresco.
\newblock {Information geometry of covariance matrix: Cartan-Siegel homogeneous bounded domains, Mostow/Berger fibration and Frechet median}.
\newblock In {\em Matrix information geometry}, pages 199--255. Springer, 2012.

\bibitem{beck2013empirical}
Alexander Beck, Young Shin~Aaron Kim, Svetlozar Rachev, Michael Feindt, and Frank Fabozzi.
\newblock {Empirical analysis of ARMA-GARCH models in market risk estimation on high-frequency US data}.
\newblock {\em Studies in Nonlinear Dynamics and Econometrics}, 17(2):167--177, 2013.

\bibitem{carr2002fine}
Peter Carr, H{\'e}lyette Geman, Dilip~B Madan, and Marc Yor.
\newblock {The fine structure of asset returns: An empirical investigation}.
\newblock {\em The Journal of Business}, 75(2):305--332, 2002.

\bibitem{choi2021kahlerian}
Jaehyung Choi.
\newblock {K{\"a}hler information manifolds of signal processing filters in weighted Hardy spaces}.
\newblock {\em arXiv preprint arXiv:2108.07746}, 2021.

\bibitem{choi2025information}
Jaehyung Choi.
\newblock Information geometry of tempered stable processes.
\newblock {\em arXiv preprint arXiv:2502.12037}, 2025.

\bibitem{choi2024diversified}
Jaehyung Choi, Hyangju Kim, and Young~Shin Kim.
\newblock Diversified reward-risk parity in portfolio construction.
\newblock {\em Studies in Nonlinear Dynamics \& Econometrics}, 2024.

\bibitem{choi2015reward}
Jaehyung Choi, Young~Shin Kim, and Ivan Mitov.
\newblock Reward-risk momentum strategies using classical tempered stable distribution.
\newblock {\em Journal of Banking \& Finance}, 58:194--213, 2015.

\bibitem{choi2015geometric}
Jaehyung Choi and Andrew~P Mullhaupt.
\newblock {Geometric shrinkage priors for K{\"a}hlerian signal filters}.
\newblock {\em Entropy}, 17(3):1347--1357, 2015.

\bibitem{choi2015kahlerian}
Jaehyung Choi and Andrew~P Mullhaupt.
\newblock K{\"a}hlerian information geometry for signal processing.
\newblock {\em Entropy}, 17(4):1581--1605, 2015.

\bibitem{cichocki2010families}
Andrzej Cichocki and Shun-ichi Amari.
\newblock Families of alpha-beta-and gamma-divergences: Flexible and robust measures of similarities.
\newblock {\em Entropy}, 12(6):1532--1568, 2010.

\bibitem{cont2004nonparametric}
Rama Cont and Peter Tankov.
\newblock Nonparametric calibration of jump-diffusion option pricing models.
\newblock {\em The Journal of Computational Finance}, 7:1--49, 2004.

\bibitem{efron1975defining}
Bradley Efron.
\newblock Defining the curvature of a statistical problem (with applications to second order efficiency).
\newblock {\em The Annals of Statistics}, pages 1189--1242, 1975.

\bibitem{efron1978geometry}
Bradley Efron.
\newblock The geometry of exponential families.
\newblock {\em The Annals of Statistics}, pages 362--376, 1978.

\bibitem{firth1993bias}
David Firth.
\newblock Bias reduction of maximum likelihood estimates.
\newblock {\em Biometrika}, 80(1):27--38, 1993.

\bibitem{georgiev2015periodic}
Krastyu Georgiev, Young~Shin Kim, and Stoyan Stoyanov.
\newblock Periodic portfolio revision with transaction costs.
\newblock {\em Mathematical Methods of Operations Research}, 81(3):337--359, 2015.

\bibitem{kass2011geometrical}
Robert~E Kass and Paul~W Vos.
\newblock {\em Geometrical foundations of asymptotic inference}.
\newblock John Wiley \& Sons, 2011.

\bibitem{kim2023deep}
Young~Shin Kim, Hyangju Kim, and Jaehyung Choi.
\newblock {Deep Calibration With Artificial Neural Network: A Performance Comparison on Option Pricing Models}.
\newblock {\em The Journal of Financial Data Science}, 5(4):100--118, 2023.

\bibitem{kim2007relative}
Young~Shin Kim and Jeong~Hyun Lee.
\newblock {The relative entropy in CGMY processes and its applications to finance}.
\newblock {\em Mathematical Methods of Operations Research}, 66:327--338, 2007.

\bibitem{kim2009computing}
Young~Shin Kim, Svetlozar Rachev, Michele~Leonardo Bianchi, and Frank~J Fabozzi.
\newblock {Computing VaR and AVaR in infinitely divisible distributions}.
\newblock 2009.

\bibitem{kim2010tempered}
Young~Shin Kim, Svetlozar~T Rachev, Michele~Leonardo Bianchi, and Frank~J Fabozzi.
\newblock {Tempered stable and tempered infinitely divisible GARCH models}.
\newblock {\em Journal of Banking \& Finance}, 34(9):2096--2109, 2010.

\bibitem{kim2011time}
Young~Shin Kim, Svetlozar~T Rachev, Michele~Leonardo Bianchi, Ivan Mitov, and Frank~J Fabozzi.
\newblock Time series analysis for financial market meltdowns.
\newblock {\em Journal of Banking \& Finance}, 35(8):1879--1891, 2011.

\bibitem{komaki2006shrinkage}
Fumiyasu Komaki.
\newblock {Shrinkage priors for Bayesian prediction}.
\newblock {\em The Annals of Statistics}, 17(2):808--819, 2006.

\bibitem{kosmidis2009bias}
Ioannis Kosmidis and David Firth.
\newblock Bias reduction in exponential family nonlinear models.
\newblock {\em Biometrika}, 96(4):793--804, 2009.

\bibitem{kosmidis2010generic}
Ioannis Kosmidis and David Firth.
\newblock A generic algorithm for reducing bias in parametric estimation.
\newblock 2010.

\bibitem{madan1998variance}
Dilip~B Madan, Peter~P Carr, and Eric~C Chang.
\newblock The variance gamma process and option pricing.
\newblock {\em Review of Finance}, 2(1):79--105, 1998.

\bibitem{merton1976option}
Robert~C Merton.
\newblock Option pricing when underlying stock returns are discontinuous.
\newblock {\em Journal of Financial Economics}, 3(1-2):125--144, 1976.

\bibitem{oda2021shrinkage}
Hidemasa Oda and Fumiyasu Komaki.
\newblock Shrinkage priors on complex-valued circular-symmetric autoregressive processes.
\newblock {\em IEEE Transactions on Information Theory}, 67(8):5318--5333, 2021.

\bibitem{rachev2011financial}
Svetlozar~T Rachev, Young~Shin Kim, Michele~L Bianchi, and Frank~J Fabozzi.
\newblock {\em Financial models with L{\'e}vy processes and volatility clustering}.
\newblock John Wiley \& Sons, 2011.

\bibitem{ravishanker2001differential}
Nalini Ravishanker.
\newblock {Differential geometry of ARFIMA processes}.
\newblock {\em Communications in Statistics-Theory and Methods}, 30(8-9):1889--1902, 2001.

\bibitem{ravishanker1990differential}
Nalini Ravishanker, Edward~L Melnick, and Chih-Ling Tsai.
\newblock {Differential geometry of ARMA models}.
\newblock {\em Journal of Time Series Analysis}, 11(3):259--274, 1990.

\bibitem{sato1999levy}
Ken-Iti Sato.
\newblock {\em L{\'e}vy processes and infinitely divisible distributions}, volume~68.
\newblock Cambridge University Press, 1999.

\bibitem{tanaka2018superharmonic}
Fuyuhiko Tanaka.
\newblock Superharmonic priors for autoregressive models.
\newblock {\em Information Geometry}, 1(2):215--235, 2018.

\bibitem{tanaka2008superharmonic}
Fuyuhiko Tanaka and Fumiyasu Komaki.
\newblock A superharmonic prior for the autoregressive process of the second-order.
\newblock {\em Journal of Time Series Analysis}, 29(3):444--452, 2008.

\bibitem{tsuchida2012mean}
Naoshi Tsuchida, Xiaoping Zhou, and Svetlozar Rachev.
\newblock {Mean-ETL portfolio selection under maximum weight and turnover constraints based on fundamental security factors}.
\newblock {\em Journal of Investing}, 21(1):14, 2012.

\end{thebibliography}

\end{document}